\documentclass[12pt]{amsart}
\usepackage[latin1]{inputenc} 
\usepackage{mathtools}
\usepackage{graphics}
\usepackage[top=1in, bottom=1in, left=1in, right=1in,footskip=.5in]{geometry}
\usepackage{amssymb,amsmath,amsthm,amscd,epsf,latexsym,verbatim,graphicx,amsfonts} 
\usepackage{enumitem}
\usepackage{mathrsfs}
\usepackage{tikz-cd}
\usepackage{xargs}
\usepackage{comment}
\usepackage{hyperref}
\usepackage{thmtools}
\usepackage{bm}
\usepackage[capitalise,noabbrev]{cleveref}
\usepackage{mathabx}
\usepackage[backend=biber,style=numeric,maxbibnames=99]{biblatex}
\usepackage{subcaption}
\usepackage{setspace}
\usepackage{lineno}

\usepackage{etoolbox}
\newtheoremstyle{definition-nobrackets}
  {\topsep}   
  {\topsep}   
  {\upshape}  
  {0pt}   	
  {\bfseries} 
  {.}     	
  {5pt plus 1pt minus 1pt} 
  {\thmname{#1}\thmnumber{ #2}\normalfont\thmnote{ #3}} 

\theoremstyle{definition-nobrackets}

\newcounter{Theorem}[section]
\numberwithin{Theorem}{section}
\numberwithin{equation}{section}
\newtheorem{theorem}[Theorem]{Theorem}
\newtheorem{example}[Theorem]{Example}
\newtheorem{definition}[Theorem]{Definition}
\newtheorem{corollary}[Theorem]{Corollary}

\newtheorem{lemma}[Theorem]{Lemma}
\newtheorem{remark}[Theorem]{Remark}

\newcommand{\wt}[1]{\widetilde{#1}}
\newcommand{\Z}{\mathbb{Z}}

\newcommand{\affineS}{\wt{S}}
\newcommand{\asg}[1]{\wt{S}_{#1}}
\newcommand{\floor}[1]{\lfloor #1 \rfloor}

\newcommand{\defn}[1]{\textbf{\emph{#1}}}

\DeclareMathOperator{\Inv}{Inv}

\title[A Triangle Characterization of
Affine Permutation Inversion Graphs]{A ChatGPT-assisted Triangle Characterization of \\
Affine Permutation Inversion Graphs}
\author{Sara C. Billey}
\address{Department of Mathematics, University of Washington, Seattle,
WA, USA}
\email{billey@uw.edu}

\author{Herman Chau}
\address{Department of Mathematics, University of Washington, Seattle,
WA, USA}
\email{beijingherman@gmail.com}

\author{Kevin Liu}
\address{Department of Mathematics and Computer Science, The University of
the South, Sewanee, TN, USA}
\email{keliu@sewanee.edu}

\date{}

\subjclass[2020]{Primary 05E16; Secondary 20F55, 05C20.}

\keywords{Affine permutation, affine symmetric group, inversion graph,
inversion set, Coxeter group, weak order, tournament graph.}

\begin{document}

\begin{abstract}
Inversion sets of permutations in the affine symmetric group
$\widetilde{S}_n$ were studied extensively by Bj\"orner and
Brenti. One of their methods for encoding an inversion set is through
an affine inversion graph, which is a certain weighted graph on vertex
set $[n]=\{1,2,\ldots,n\}$. Subsequent work by Papi characterized
which graphs arise as affine inversion graphs. In this paper, we
provide an alternative characterization in terms of a simple local
condition on each triangle in a weighted tournament graph.  This new
characterization was produced with the assistance of ChatGPT, which
suggested several key insights that simplified portions of Papi's
original characterization. Consequences of our characterization
include efficient algorithms for recognizing inversion graphs and
inversion sets. Furthermore, we give bounds on the weights along
directed paths, and we show that standardizing the labels on an
induced subgraph results in another inversion graph.  We conclude with
a new order $O(|R|+n^{3})$ algorithm for testing if a given set $R$ is
the inversion set of an affine permutation.   
\end{abstract}

\maketitle

\section{Introduction}\label{sec:graphs}

Artificial intelligence in mathematics is an area of growing interest. 
Early uses included generating conjectures  \cite{davies2021,HJCBRBK.2025},
some of which were later proven by mathematicians, e.g., see \cite{gurevich2024}. 
Models now exist for specific subfields in mathematics, and these 
models are capable of improving previously best-known results, 
constructing new examples distinct from previous ones, and 
resolving long-standing open problems \cite{charton2024,swirszcz2025}. 
Recently, an internal model at OpenAI produced a counterexample to Erd\"os's 
longstanding unit distance conjecture \cite{alon2026,openai2026}.  

This paper involves a ChatGPT-assisted solution to a problem about 
the affine symmetric group $\widetilde{S}_n$. This group appears in
numerous contexts in mathematics, including combinatorics,
representation theory, and Lie algebras. See \cite{Lewis} for a
general overview and the references therein for further details. The
group $\widetilde{S}_n$ can be described via regions in an infinite
affine hyperplane arrangement, as a Coxeter group, and through certain
$n$-periodic bijections on $\mathbb{Z}$. This paper will focus on the
latter realization as bijections, and hence, we refer to elements in
$\widetilde{S}_n$ as affine permutations.  Similar to the classical
permutations in the symmetric group $S_n$, affine permutations in
$\widetilde{S}_n$ have finite inversion sets, which record pairs that
are out of order.  This work was motivated by our ongoing
investigations on more general inversion sets and the affine higher Bruhat orders
\cite{BilleyChauLiu.2026.affineHB,billeychauliu.fpsac2026,Chau.2025}.

In \cite{bjorner_brenti_1996}, Bj\"orner and Brenti defined an acyclic
directed multigraph encoding the inversions of an affine permutation
$w\in \asg{n}$ in a compact form, and they showed that this graph
uniquely determines an affine permutation. They posed the question of
characterizing all such affine inversion graphs.  This problem was
solved by Papi \cite{Papi.1997}.  We observed experimentally
that in many cases a simpler characterization exists using certain
types of local rules, and we proved several lemmas supporting
a characterization in terms of these local rules. Although our original
guess was insufficient, it led us to use ChatGPT to refine it into 
an elegant characterization, which is the main result of this 
paper. 

To state the characterization, we use the following notation from
\cite{Papi.1997}. Further notation and definitions used can be found
in \Cref{sec:background}.  Let $G$ be a weighted directed graph on
$[n]:=\{1,2,\dotsc , n \}$ with adjacency matrix $t(\cdot,\cdot) \in
\mathbb{Z}_{\geq 0}^{n\times n}$.  Thus, if $a\to b$ is an edge in
$G$, then $t(a,b)$ is the weight on this edge.  If $a\to b$ is not an
edge in $G$, then $t(a,b)=0$ by definition.  Define the \defn{shifted
adjacency matrix} $\widetilde t(a,b)$ by
\begin{equation}\label{eq:shiftdef}
\widetilde t(a,b):=
\begin{cases}
 t(a,b), & a\leq b,\\
 t(a,b)-1, & a>b.
\end{cases}
\end{equation}

A \defn{tournament} is a directed graph with exactly one edge
between each pair of distinct vertices.  Furthermore, a tournament 
$T$ on vertex set $[n]$ with
nonnegative integer edge weights $t(a, b)$ for all $a,b\in [n]$
satisfies the \defn{zero weight condition} if whenever
$t(a,b)=t(b,a)=0$ for $1\leq a<b\leq n$, then the edge connecting
$a,b$ is oriented $a \to b$.

\begin{theorem}[(Triangle Characterization)]\label{thm:sufficiency}
Let $T$ be a weighted tournament on $[n]$ with adjacency matrix
$t(\cdot,\cdot)\in \Z_{\geq 0}^{n \times n}$ satisfying the zero
weight condition. Let $\widetilde t(\cdot, \cdot)$ be the shifted adjacency
matrix of $T$ as defined in \eqref{eq:shiftdef}.  Then, there exists a
unique $w\in\widetilde S_n$ with affine inversion graph $G_w=T$ if and
only if 
\begin{equation}\label{eq:shiftedtriangle}
\widetilde t(a,c) - \widetilde t(a,b)-\widetilde t(b,c)\ \in\ \{0,1\}
\end{equation}
for every $a\to b\to c$ in $T$.
\end{theorem}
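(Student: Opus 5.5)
The plan is to pass between $w$ and $T$ through an explicit formula for the edge weights in terms of the window $[w(1),\dots,w(n)]$, and to reduce the triangle condition \eqref{eq:shiftedtriangle} to the elementary fact that $\lfloor x+y\rfloor-\lfloor x\rfloor-\lfloor y\rfloor\in\{0,1\}$ for real $x,y$. Uniqueness is not new: Bj\"orner and Brenti \cite{bjorner_brenti_1996} showed that $G_w$ determines $w$. So the real content is necessity of \eqref{eq:shiftedtriangle} together with existence.

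\emph{Necessity.} Fix $w\in\asg n$ and $a\neq b$ in $[n]$. Counting the inversions $(a,b+kn)$ directly from the definition of $G_w$ gives $t(a,b)$ as the number of integers $k$ in a half-open interval. Its lower endpoint is $0$ or $1$ according to whether $a<b$ or $a>b$, and its upper endpoint is $(w(a)-w(b))/n$. I first want to rewrite this count uniformly as
\[
\widetilde t(a,b)=\Big\lfloor \tfrac{(w(a)-a)-(w(b)-b)}{n}+\epsilon(a,b)\Big\rfloor
\]
for an explicit fractional correction $\epsilon$, valid whenever the edge is oriented $a\to b$. The $-1$ in \eqref{eq:shiftdef} exists precisely to absorb the difference between the two lower endpoints. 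It is likely cleaner to write $w(a)=nq_a+r_a$ and express everything through the real numbers $x_a=(w(a)-a)/n$. With such a formula in hand, an oriented path $a\to b\to c$ gives
\[
\widetilde t(a,c)-\widetilde t(a,b)-\widetilde t(b,c)=\lfloor X+Y\rfloor-\lfloor X\rfloor-\lfloor Y\rfloor,
\]
where $X$ and $Y$ are the arguments for $(a,b)$ and $(b,c)$. I would need to check that the correction terms add correctly, so that $\epsilon(a,b)+\epsilon(b,c)$ has the same floor behaviour as $\epsilon(a,c)$. This should reduce to a case check on the relative order of $a,b,c$ in $[n]$. The zero weight condition and the tournament property of $G_w$ are presumably recorded in \Cref{sec:background}.

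\emph{Sufficiency.} Given $T$ satisfying \eqref{eq:shiftedtriangle} and the zero weight condition, I first show that $T$ is acyclic, i.e.\ transitive. For a $3$-cycle $a\to b\to c\to a$, apply \eqref{eq:shiftedtriangle} to all three paths $a\to b\to c$, $b\to c\to a$, $c\to a\to b$. This yields both $\widetilde t(a,c)\ge \widetilde t(a,b)+\widetilde t(b,c)$ and the analogous inequalities with the shifts. Since $c\to a$ means $t(a,c)=0$, we get $\widetilde t(a,c)\in\{0,-1\}$. This should contradict nonnegativity of the weights on the cycle once the zero weight condition is used to pin down the orientation of any weight-$0$ edges. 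Transitivity then gives a linear order $s=v_1\to v_2\to\cdots\to v_n$.

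Next I construct $w$. Put $w(s)$ large, and for each vertex $v$ define $w(v)$ by inverting the formula from the necessity step along the single edge $s\to v$. That is, choose $w(v)-v \equiv$ the value forced by $\widetilde t(s,v)$, with its residue modulo $n$ chosen so that the values $w(1),\dots,w(n)$ are pairwise distinct modulo $n$ and are consistent with the orientation. One then also normalises the sum to $\binom{n+1}{2}$, which is possible since shifting all of $w$ by a constant does not change inversions. Finally I verify that $G_w$ has weight $t(u,v)$ on every edge $u\to v$. The formula gives $\widetilde t_w(u,v)$ as a floor expression in the differences of $x_s,x_u,x_v$. Combined with \eqref{eq:shiftedtriangle} applied to the path $s\to u\to v$, this should force equality with $\widetilde t(u,v)$, because both quantities are determined by $\widetilde t(s,u),\widetilde t(s,v)$ up to a $\{0,1\}$ ambiguity.

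\emph{Main obstacle.} I expect the hardest step to be resolving this $\{0,1\}$ ambiguity in the final verification. The single triangle through the source $s$ only pins $\widetilde t(u,v)$ down to two values. The choice of the fractional parts, i.e.\ the residues of $w(v)$ modulo $n$, must therefore be made globally consistent: the order of the residues must encode, for every pair $u,v$, which of the two values occurs. My first attempt is to define the residue order by sorting the vertices by the quantity $\widetilde t(s,v)$ refined by the orientation of $T$. I would then show, using \eqref{eq:shiftedtriangle} on all triangles $s\to u\to v$ and $u\to v\to v'$, that this ordering is well defined, so that no cyclic contradiction arises among the fractional parts. If that fails, I would instead induct on $n$: delete the sink $v_n$, obtain $w'$ on $n-1$ vertices by induction, and show that the constraints on the new value $w(v_n)$ coming from each triangle cut out a nonempty interval.
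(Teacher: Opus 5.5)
Your necessity direction works once you fix an orientation slip. The count of $k$ with $\delta_{a>b}\le k<(w(a)-w(b))/n$ is the weight of the edge $b\to a$, not of $a\to b$. Done correctly, it gives $\widetilde t(a,b)=\lfloor (w(b)-w(a))/n\rfloor$ for every edge $a\to b$ of $G_w$, with no correction term at all. The Boolean triangle condition is then literally $\lfloor X+Y\rfloor-\lfloor X\rfloor-\lfloor Y\rfloor\in\{0,1\}$, which is shorter than the paper's route through \Cref{lem:m.simplified} and the parity table of \Cref{cor:papi-triangle.identity}. Your acyclicity step is essentially \Cref{lem:transitivity}. For existence the paper constructs nothing: it verifies Papi's conditions (I) and (II) of \Cref{thm:papi4}, the second via $\widetilde t(a,b)\le\widetilde t(a,c)+\widetilde t(c,b)+1$. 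Your direct construction would be a self-contained alternative that also produces $w$ explicitly. As written, however, its key step fails.

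\textbf{The gap is the residue order.} You correctly locate the difficulty, but your one concrete proposal (sort by $\widetilde t(s,v)$, break ties by orientation) is wrong.
\begin{itemize}
\item For $u\to v$, the triangle $s\to u\to v$ gives $\widetilde t(s,v)\ge\widetilde t(s,u)+\widetilde t(u,v)\ge\widetilde t(s,u)$. So your sort is, up to tie-breaking, the order of the values $w(v)$, not of their residues.
\item It fails already on the graph of \Cref{fig:example-23} with $s=3$. Any such order puts $4$ (where $\widetilde t(3,4)=0$) before $1$ (where $\widetilde t(3,1)=1$). With $w(v)=w(s)+n\,\widetilde t(s,v)+\rho_v$ and $\rho_4<\rho_1$, you get $\widetilde t_w(4,1)=\widetilde t(3,1)-\widetilde t(3,4)=1$, whereas $\widetilde t(4,1)=0$.
\item The order must instead be read off the triangle defects. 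For $u\to v$ with $u,v\neq s$, set $\epsilon_{uv}=\widetilde t(s,v)-\widetilde t(s,u)-\widetilde t(u,v)\in\{0,1\}$, and require $\rho_u<\rho_v$ iff $\epsilon_{uv}=0$.
\item This defines a tournament on $[n]\setminus\{s\}$, and it is transitive. For $u\to v\to x$,
\[
\epsilon_{uv}+\epsilon_{vx}-\epsilon_{ux}=\widetilde t(u,x)-\widetilde t(u,v)-\widetilde t(v,x)\in\{0,1\},
\]
while the two possible $3$-cycles would make the left side $-1$ or $2$.
\item Take $\rho_v\in\{1,\dots,n-1\}$ to be the ranks in this order. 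Then $\lfloor (w(v)-w(u))/n\rfloor=\widetilde t(s,v)-\widetilde t(s,u)-\epsilon_{uv}=\widetilde t(u,v)$.
\item Nonnegativity of $\widetilde t$ (from the zero weight condition), together with distinct residues, gives the correct orientations.
\item The normalizing shift exists because $\binom{n+1}{2}-\sum_a w(a)\equiv 0\pmod n$ once the $w(a)$ are distinct mod $n$.
\end{itemize}

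\textbf{Your fallback induction is also incomplete.} Deleting the sink turns the modulus $n$ into $n-1$, so an inductive $w'\in\asg{n-1}$ cannot simply be extended. You would need a modulus-free reformulation: reals $y_v$ with distinct fractional parts and $\lfloor y_v-y_u\rfloor=\widetilde t(u,v)$. You would then also need a one-dimensional Helly argument for the new vertex, whose pairwise step is the triangle condition on $a\to b\to x$.
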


We refer to the condition on all length 2 paths in
\eqref{eq:shiftedtriangle} as the \defn{Boolean triangle condition}.
In general, a tournament is not necessarily acyclic, but if it
satisfies the zero weight condition and the Boolean triangle
condition, we will show in \Cref{lem:transitivity} that it must be
acyclic as well.  These two conditions also lead to the following
corollary.

\begin{corollary}[(Local recognition complexity)]\label{cor:verification.time}
 Let $G$ be a weighted directed graph on $[n]$. 
One can verify whether $G$ is realizable as an
affine inversion graph in $O(n^3)$ arithmetic operations.
Furthermore, if $G$ is nonrealizable, then it has a certificate of failure consisting
 of at most 3 vertices.
\end{corollary}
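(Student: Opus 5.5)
The plan is to reduce the corollary to \Cref{thm:sufficiency}, which turns realizability into a finite list of local checks. Each check involves at most three vertices, so both the $O(n^3)$ bound and the certificate size will follow by counting.

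First, I would check that $G$ is a weighted tournament. The affine inversion graph $G_w$ of any $w\in\asg{n}$ is a tournament, so a realizable $G$ has exactly one edge between each pair of distinct vertices, with nonnegative integer weights. Given the adjacency matrix $t$, I would scan all pairs $a<b$ and confirm that exactly one of $a\to b$, $b\to a$ is an edge. I would also check the zero weight condition: if $t(a,b)=t(b,a)=0$, the edge must be oriented $a\to b$. Both checks use $O(n^2)$ operations. If a check fails, the offending pair $\{a,b\}$ (or the single vertex carrying a loop or negative weight) is a certificate of at most $2$ vertices. Here I would be careful to record, as in the definitions of \Cref{sec:background}, that inversion graphs have no loops and that a weight-zero edge still carries an orientation. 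A violation is therefore always witnessed by at most two vertices.

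Second, assuming these pass, \Cref{thm:sufficiency} says that $G$ equals $G_w$ for some (unique) $w$ if and only if the Boolean triangle condition \eqref{eq:shiftedtriangle} holds for every directed path $a\to b\to c$. I would compute $\widetilde t$ from \eqref{eq:shiftdef} in $O(n^2)$ time. Then I would loop over all ordered triples of distinct vertices $(a,b,c)$, at most $n^3$ of them. For each triple, check whether $a\to b$ and $b\to c$ are edges, and if so evaluate $\widetilde t(a,c)-\widetilde t(a,b)-\widetilde t(b,c)$ and test membership in $\{0,1\}$. This is $O(1)$ arithmetic per triple, so $O(n^3)$ in total. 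Note that in a tournament, if $a\to b\to c$ then the pair $\{a,c\}$ carries one edge. If it is $c\to a$, then $t(a,c)=0$ and $\widetilde t(a,c)$ is still defined by \eqref{eq:shiftdef}, so no special casing is needed; the theorem handles this case, and by \Cref{lem:transitivity} it will in fact force a failure.

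Finally, for the certificate: if $G$ is not realizable and passes the pairwise checks, the theorem yields a path $a\to b\to c$ violating \eqref{eq:shiftedtriangle}. This failure depends only on the induced weighted subgraph on $\{a,b,c\}$, since $\widetilde t$ on those three pairs is determined by $t$ and the vertex labels. So $\{a,b,c\}$ is a certificate of failure, and anyone can verify it in constant time.

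The only subtle point is the first step. I need to make sure that the preconditions of \Cref{thm:sufficiency}, namely being a tournament and satisfying the zero weight condition, are genuinely necessary for realizability and are witnessed locally. This requires citing that every $G_w$ is a tournament satisfying the zero weight condition, which I expect is established in \Cref{sec:background} or is immediate from the definition of $G_w$.
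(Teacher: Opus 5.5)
Your proposal is correct and is essentially the argument the paper intends. The paper gives no separate proof and treats the corollary as immediate from \Cref{thm:sufficiency}: check the tournament and zero weight conditions on pairs in $O(n^2)$ operations, and check the Boolean triangle condition \eqref{eq:shiftedtriangle} on all ordered triples in $O(n^3)$ operations. A violating pair or violating path $a\to b\to c$ then serves as the certificate of at most $3$ vertices, exactly as you describe.
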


In \Cref{sec:background}, we state the required background for the
proof of the Triangle Characterization.  The proof appears in
\Cref{sec:triangle.rule} after proving several lemmas.  Several consequences
 appear at the end of the paper.  These include
path inequalities, a hereditary property, and a new algorithm to
recognize an affine inversion set.

\subsection*{Acknowledgments and use of AI tools} The authors used
ChatGPT Pro on February 9, 2026, to generate a candidate statement and
proof outline for Theorem~\ref{thm:sufficiency}.  The input included
Papi's paper \cite{Papi.1997}, \Cref{lem:m.simplified},
\Cref{lem:triangle.identity}, and the hope that a variation on the
condition $t(a,c) - t(a,b)- t(b,c)\ \in\ \{0,1,-1\}$ for every
directed path $a\to b\to c$ in $T$ would suffice to characterize the
affine inversion graphs.  The response took 54 minutes. It included a
counterexample using the acyclic tournament $1 \to 2 \to 3$, $1 \to 3$
with all edge weights 1.  Such a graph would imply the window notation
for an affine permutation $(w_{1},w_{2},w_{3})$ has
$w_{1}<w_{2}<w_{3}$ because of the edge directions. In addition,
the edge weights of 1 require that $w_{2}-w_{1}>n$, \
$w_{3}-w_{2}>n$, and $w_{3}-w_{1}<2n$, which is impossible.  The
response also proposed the Boolean triangle condition given in
\Cref{thm:sufficiency}, along with other conditions.  We have improved
upon both the initial proposed characterization and proof in this
paper.

\section{Background}\label{sec:background}

A bijection $w: \Z \to \Z$ is called \defn{$n$-periodic} if for all $x
\in \Z$, we have $w(x+n) = w(x)+n$.  The
\defn{affine symmetric group} $\affineS_n$ is the group of all
$n$-periodic bijections $w: \Z \to \Z$ that satisfy the additional
property $\sum_{a \in [n]}w(a) = \binom{n+1}{2}$.  Notice that since
affine permutations are $n$-periodic, an element $w \in \affineS_n$ is
uniquely defined by its values on $[n]=\{1,\ldots,n\}$. The \defn{window notation} of 
$w$ expresses these values in the form $(w(1),w(2),\ldots,w(n))$.

For $w\in \affineS_n$, the \defn{inversion set} of $w$ is
\begin{equation}\label{eq:inversion.set.def}
\Inv(w)=\left\{(a,b)\in [n] \times \mathbb{Z} : a<b \text{ and } w(a)>
w(b) \right\}.
\end{equation}
Define the \defn{length} of $w$, denoted $\ell(w)$, to be the number
of inversions of $w$.  The length of an affine permutation is always
finite.

Affine permutations are uniquely determined by their inversion set,
just as classical permutations are uniquely determined by their
inversion set \cite[Thm. 4.6]{bjorner_brenti_1996}.  Furthermore,
inversion sets of affine permutations can be completely characterized
as follows.  This presentation follows from the discussion of affine
inversions in \cite[Prop. 2.1]{Barkley.Speyer.2024} and the more
general characterization of inversion sets for Coxeter groups in
\cite[Lemma 4.1(d)]{Dyer.2019}, though stated a bit differently. Here
we consider each $(a,b)\in \Inv(w)$ under its natural identification
in the quotient group $\mathbb{Z}^2/(\mathbb{Z}\cdot (n,n))$.

\begin{theorem}\label{lem:inversion.sets} Let $n$ be a positive
integer, and let $R$ be a finite subset of $\mathbb{Z}^2/(\mathbb{Z}\cdot (n,n))$.
Then, $R$ is the inversion set for some affine permutation in
$\affineS_{n}$ if and only if for all triples of integers
$a<b<c$, we have
\begin{enumerate}[label=(\Roman*), ref=(\Roman*)]
\item \label{part:trapped.value}$(a,c) \in R$ implies  $(a,b) \in R$ or $(b,c) \in R$, 
\item \label{part:inv.packet.gaps.implies.all}$(a,b) \in R$ and $(b,c) \in R$ implies  $(a,c) \in R$, and
\item \label{part:inv.congruence}$(a,b) \in R$ implies $a \not \equiv b \pmod{n}$ and $(a,b-mn)
\in R$ for all $m \in \Z_{\geq 0}$ such that $ a < b-mn<b$.
\end{enumerate}
\end{theorem}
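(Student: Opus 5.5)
Both directions reduce to the finite case, working with pairs $(a,b)$ with $a<b$ modulo the diagonal action of $(n,n)$.

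\emph{Necessity.} Take $w\in\affineS_n$ and check the three conditions directly from \eqref{eq:inversion.set.def}.
\begin{itemize}
\item For \ref{part:trapped.value}: if $w(a)>w(c)$, then $w(b)$ is either below $w(a)$, giving $(a,b)\in\Inv(w)$, or above $w(c)$, giving $(b,c)\in\Inv(w)$.
\item For \ref{part:inv.packet.gaps.implies.all}: $w(a)>w(b)>w(c)$ gives $w(a)>w(c)$ by transitivity.
\item For \ref{part:inv.congruence}: if $b=a+kn$, then $w(b)=w(a)+kn>w(a)$ because $k>0$, so $(a,b)$ cannot be an inversion. For $m\ge0$ with $a<b-mn<b$, we have $w(b-mn)=w(b)-mn\le w(b)<w(a)$, so $(a,b-mn)\in\Inv(w)$.
\end{itemize}
Finiteness of $\Inv(w)$ modulo $(n,n)$ is standard. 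The point is that $w(b)-w(a)$ is eventually dominated by $b-a$, since $w(x)-x$ is periodic and hence bounded.

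\emph{Sufficiency.} This is the substantive direction. Given $R$, define a relation $\prec$ on $\Z$ by declaring $x\prec y$ if and only if either $x<y$ and $(x,y)\notin R$, or $x>y$ and $(y,x)\in R$. Here membership is tested modulo $(n,n)$, so $\prec$ is invariant under $x\mapsto x+n$.
\begin{itemize}
\item Any two distinct integers are comparable, since exactly one of the two clauses applies.
\item Transitivity is a case check over the six relative orders of three integers $a<b<c$. Each case reduces to \ref{part:trapped.value}, to \ref{part:inv.packet.gaps.implies.all}, or to the contrapositive of one of them, i.e.\ the standard fact that "transitive and co-transitive" characterizes inversion sets of total orders.
\end{itemize}
Hence $\prec$ is an $n$-periodic total order on $\Z$.

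Next I use finiteness of $R$. There is an $N$ such that every pair $(x,y)$ with $y-x>N$ lies outside $R$, so such pairs satisfy $x\prec y$. Consequently each $y$ has only finitely many $\prec$-predecessors, and $(\Z,\prec)$ is order-isomorphic to $(\Z,<)$. Let $w:\Z\to\Z$ be an order isomorphism, unique up to an additive constant. Then $x\mapsto w(x+n)$ is also an order isomorphism, so $w(x+n)=w(x)+k$ for some constant $k$.

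\emph{Main obstacle.} The key step is showing $k=n$, and I plan to count. The $\prec$-interval between $x$ and $x+n$ contains exactly $k-1$ elements. By periodicity of $\prec$, the set $\{x+1,\dots,x+n-1\}$ together with everything swapped across the boundary in a period-invariant way has the same size as the ordinary interval. This is where \ref{part:inv.congruence} is needed. The condition $a\not\equiv b$ ensures $x\prec x+n$ for every $x$, so $k>0$. The downward-closure of \ref{part:inv.congruence} guarantees that inversions across a shift by $n$ come in matched families. Concretely, I will show that the number of $y$ with $y\prec x$ and $y>x$ equals the number of $y$ with $y\succ x$ and $y<x$, summed over a residue system. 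This is the bijection $(a,b)\mapsto(b-n,a)$ paired with condition \ref{part:inv.congruence}. It gives $\sum_{i\in[n]}(w(i)-i)$ constant, and $k=n$.

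\emph{Finishing.} Once $k=n$, I normalize the additive constant so that $\sum_{a\in[n]}w(a)=\binom{n+1}{2}$. By construction $w(x)>w(y)$ with $x<y$ holds exactly when $(x,y)\in R$, so $\Inv(w)=R$.
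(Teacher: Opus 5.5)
Your necessity direction is correct, and so is the construction of the $n$-periodic total order $\prec$, with transitivity coming from (I) and (II). The paper gives no proof of this statement (it quotes Barkley--Speyer and Dyer), so a self-contained argument along your lines would be fine. One small slip: each $y$ has infinitely many $\prec$-predecessors, namely every $x<y-N$. What finiteness of $R$ actually gives is that every $\prec$-interval is finite and there is no $\prec$-least or $\prec$-greatest element, and that is what yields $(\Z,\prec)\cong(\Z,<)$.

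The genuine gap is the step you call the main obstacle, $k=n$. The matching you propose does not exist. The map $(a,b)\mapsto(b-n,a)$ does not send $R$ into $R$: for $n=2$ and $R=\{(1,2)\}$, the inversion set of the window $(2,1)$, it sends $(1,2)$ to $(0,1)\notin R$. Condition (III), which only lowers the second coordinate by multiples of $n$, cannot supply such a matching either. More importantly, you never state an identity relating the crossing counts to the values of $w$, so ``it gives $\sum_{i\in[n]}(w(i)-i)$ constant, and $k=n$'' has no support. Your final normalization also silently needs $\sum_{i\in[n]}w(i)\equiv\binom{n+1}{2}\pmod n$, because an additive constant only changes the sum by multiples of $n$.

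The missing step is short. Since $w$ is a bijection with $w(x+mn)=w(x)+mk$, it maps the $n$ residue classes modulo $n$ onto the $n$ sets $w(i)+k\Z$, $i\in[n]$. These must be pairwise disjoint cosets of $k\Z$ whose union is $\Z$, so $|k|=n$. Next, $x\prec x+n$ by the first half of (III), so $k>0$ and hence $k=n$. (Finiteness alone also works: $x+n\prec x$ would force $(x,x+mn)\in R$ for all $m\geq 1$.) The same observation shows $w(1),\dots,w(n)$ is a complete residue system modulo $n$, which is exactly the congruence your normalization needs.

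If you prefer to keep the counting idea, you must prove $w(x)=x+|A_x|-|B_x|+C$, where $A_x=\{y>x:(x,y)\in R\}$ and $B_x=\{y<x:(y,x)\in R\}$. This follows by comparing the initial segments $\{z: z\prec x\}$ and $\{z: z<x\}$, whose symmetric difference is $A_x\cup B_x$. Then $n$-periodicity of $|A_x|$ and $|B_x|$ gives $k=n$ directly. Also $\sum_{i\in[n]}|A_i|=\sum_{i\in[n]}|B_i|=|R|$, by matching each class of $R$ with its representative whose first, respectively second, coordinate lies in $[n]$. This gives $\sum_{i\in[n]}(w(i)-i)=nC$, so the normalization is $C=0$.
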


From \Cref{lem:inversion.sets}, it follows that if $(a,b) \in \Inv
(w)$, then $(b,a+kn) \not \in \Inv (w)$ for all $k \in \Z$.
Furthermore, for each $(a,b) \in \Inv (w)$ with $b-a<n$, there exists
a value $m_{a,b}$ such that $(a,b+kn) \in \Inv (w) $ if and only if
$0\leq k<m_{a,b}$.  Thus, inversion sets of affine permutations can be
encoded succinctly by certain directed acyclic graphs with edge
weights encoding the nonnegative integer values $m_{a,b}$.  As
mentioned in the introduction, these graphs were originally defined by
Bj\"orner and Brenti \cite{bjorner_brenti_1996}, prior to
\cite{Barkley.Speyer.2024,Dyer.2019}.  Recall the floor of $x$,
denoted $\floor{x}$, is the largest integer less than or equal to $x
\in \mathbb{R}$, even if $x<0$.

\begin{definition}\cite[\S 5]{bjorner_brenti_1996}\label{def:BB-graph}
For \(w\in \widetilde S_n\), define the \defn{(completed) affine
inversion graph} \(G_{w}\) to be the weighted tournament on \([n]\)
whose edge between \(a<b\) is oriented \(a\to b\) if \(w_a<w_b\), and
\(b\to a\) if \(w_b<w_a\).  Either way, the weight on this edge is
\[
\left|\left\lfloor \frac{w_b-w_a}{n}\right\rfloor\right|.
\]
\end{definition}

\begin{remark}\label{rem:zero_weight_condition}
We note that $\left| \left\lfloor (w_{b}-w_{a})/n \right\rfloor
\right| = 0$ if and only if $0<w_{b}-w_{a}<n$.  Therefore, one may
omit the directed edges of weight $0$ with no loss of information.
Bj\"orner-Brenti chose to omit such edges, but we prefer to keep these
zero weight edges in place, which results in $G_w$ being a weighted
tournament satisfying the zero weight condition. This simplifies the 
statement of \cref{thm:sufficiency} and 
reduces casework in its proof. 
\end{remark}

\begin{theorem}\cite[Thm. 4.6,
Cor. 5.2]{bjorner_brenti_1996}\label{thm:bb.unique}
The affine permutation $w$ is uniquely determined from the list of
weighted in-degrees of the vertices in $G_w$, where each edge contributes
its weight to the in-degree of its target.
\end{theorem}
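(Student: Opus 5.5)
The plan is to reduce the statement to the affine Lehmer-code result \cite[Thm.~4.6]{bjorner_brenti_1996}. That result says $w$ is determined by the vector $(\iota_1,\dots,\iota_n)$, where $\iota_b:=\#\{y\in\Z : y>b,\ w(y)<w(b)\}$. The work is therefore to express these counts through the weighted in-degrees of $G_w$, and then to remove a correction term that the in-degrees do not obviously see.

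\textbf{Step 1: express $\iota_b$ through edge weights.} Fix $b\in[n]$ and split the $y$ counted by $\iota_b$ by residue. Each such $y$ equals $a+kn$ for some $a\in[n]\setminus\{b\}$; $y\equiv b$ is excluded by periodicity. The conditions become $a+kn>b$ and $w_a+kn<w_b$.
\begin{itemize}
\item If $w_a>w_b$, the second condition forces $k<0$, while the first forces $k\ge 0$. So there is no contribution, matching the fact that the edge is then $b\to a$ and is not incoming at $b$.
\item If $w_a<w_b$, write $D=w_b-w_a>0$. Since $D\not\equiv 0 \pmod n$, the number of $k\ge 1$ with $kn<D$ is $\lfloor D/n\rfloor=t(a,b)$. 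When $a>b$ the value $k=0$ is also allowed.
\end{itemize}
This gives
\[
\iota_b=\sum_{a\to b} t(a,b)\;+\;\#\{a\in[n] : a>b,\ a\to b\},
\]
so $\iota_b$ is the weighted in-degree of $b$ plus a correction recording which larger labels point into $b$.

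\textbf{Step 2: recover the orientation from the in-degrees (main obstacle).} The correction term depends only on the orientation of $G_w$, that is, on the relative order of $w_1,\dots,w_n$. So the remaining task is to show that the in-degree vector determines this order. I would first try to read the order off vertex by vertex:
\begin{itemize}
\item The vertex with the smallest window value has in-degree $0$. It may not be the only such vertex, because zero-weight edges also contribute nothing.
\item Among vertices of in-degree $0$, the zero weight condition pins down the candidate. If $u\to v$ has weight $0$, then $u<v$ and $0<w_v-w_u<n$. Hence among the in-degree-$0$ vertices the values $w$ increase with the labels, and the minimum-value vertex should be the smallest label among them.
\item I would then delete that vertex, adjust the in-degrees, and repeat. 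The point to check carefully is that the deletion step is well defined when edges of positive weight are present.
\end{itemize}

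\textbf{Fallback for Step 2.} If the peeling argument breaks down, I would argue injectivity directly. Suppose $w\ne v$ have equal in-degree vectors. Sum the Step 1 identity over $b$: the in-degrees sum to $\ell(w)-\#\{$descending edges$\}$. Then compare the two Lehmer codes along the first value where $w$ and $v$ disagree, and derive a contradiction.

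\textbf{Step 3: conclude.} Once the orientation is known, Step 1 determines every $\iota_b$, and \cite[Thm.~4.6]{bjorner_brenti_1996} then determines $w$. I expect Step 2 to be the genuinely delicate part, since it is exactly where the zero-weight edges enter. Step 1 is a routine floor-function count.
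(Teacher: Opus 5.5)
Your route is the right one: identify the weighted in-degrees with the affine inversion table and invoke \cite[Thm.~4.6]{bjorner_brenti_1996}. This is what the paper's citation of \cite[Thm.~4.6, Cor.~5.2]{bjorner_brenti_1996} packages; the paper gives no further argument. However, Step~1 miscounts, and the ``main obstacle'' of Step~2 is an artifact of that miscount.

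When $w_a<w_b$ and $a>b$, the weight of the edge $a\to b$ is computed with the smaller label first. Since $D\not\equiv 0\pmod n$,
\[
t(a,b)=\left|\left\lfloor \frac{w_a-w_b}{n}\right\rfloor\right|=\left\lceil \frac{D}{n}\right\rceil=\left\lfloor \frac{D}{n}\right\rfloor+1 .
\]
Your $\lfloor D/n\rfloor$ is the shifted weight $\widetilde t(a,b)=t(a,b)-\delta_{a>b}$, not $t(a,b)$. The extra value $k=0$ that you add when $a>b$ is exactly the $+1$ already built into $t(a,b)$. So the correct identity is $\iota_b=\sum_{a\to b}t(a,b)$, with no correction term. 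The weighted in-degree list \emph{is} the affine inversion table, and Thm.~4.6 gives uniqueness in one line.

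The paper's example $w=(4,5,-1,2)$ exposes the error. Vertex $1$ has incoming edges $4\to 1$ and $3\to 1$ of weights $1$ and $2$. The true count is $\iota_1=\#\{3,4,7\}=3$, equal to the weighted in-degree, whereas your formula gives $3+2=5$.

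As written, the proposal therefore has a genuine gap: it rests on a false formula and then on Step~2, which is never proved. Your peeling step does correctly locate the vertex $m$ of minimal window value. Two in-degree-$0$ vertices are joined by an edge of weight $0$, which the zero weight condition orients from the smaller to the larger label, so $m$ is the smallest label of in-degree $0$. But ``delete $m$ and adjust the in-degrees'' cannot be carried out, because the in-degree list does not reveal the weights $t(m,v)$ of the edges leaving $m$. The fallback also names no actual comparison argument. Once Step~1 is corrected, Step~2 is unnecessary and the proof is complete.
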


\begin{example} 
Consider $w=(4,5,-1,2)\in \affineS_{4}$, with inversion set
\[
\Inv (w)=\{(1, 3), (1, 4), (1, 7), (2, 3), (2, 4), (2, 7)\}.
\]
The affine inversion graph $G_{w}$ is shown in \Cref{fig:example-23}.
Observe the edge from $1\to 2$ is labeled 0 because $4=w_{1}<w_{2}=5$
are in order and within $4$ of each other.  The edge $3 \to 2$ has
label $\left| \left\lfloor \frac{(w_{3}-w_{2})}{4} \right\rfloor
\right| =\left| \left\lfloor \frac{-6}{4} \right\rfloor \right|=2$.
This edge label indicates the number of inversions on the congruence
classes $2,3$ from \Cref{lem:inversion.sets}
\Cref{part:inv.congruence}, namely $(2,3)$ and $(2,7)$. Note that the
edge $(3,2)$ points in the reverse direction from the inversion
$(2,3)$ as defined.  Observe also that there is a directed path $3 \to
4 \to 1 \to 2$. This corresponds to the unique total order on $[n]$
determined by the indices where values in the window $(4,5,-1,2)$ are
in increasing order.
\end{example}

\begin{figure}[ht]
\centering
\begin{tikzpicture}[
    vertex/.style={circle,draw,inner sep=1.5pt,minimum size=6mm},
    edge/.style={-{Stealth[length=2mm]},thick},
    zeroedge/.style={-{Stealth[length=2mm]},thick,dashed},
    lab/.style={font=\small,fill=white,inner sep=1pt}
]
  \node[vertex] (v3) at (0,0) {$3$};
  \node[vertex] (v4) at (2,0) {$4$};
  \node[vertex] (v1) at (4,0) {$1$};
  \node[vertex] (v2) at (6,0) {$2$};

  \draw[zeroedge] (v3) -- node[lab,below] {$0$} (v4);
  \draw[edge]     (v4) to node[lab,below] {$1$} (v1);
  \draw[zeroedge] (v1) -- node[lab,below] {$0$} (v2);

  \draw[edge]     (v3) to[bend left=35] node[lab,above] {$2$} (v1);
  \draw[edge]     (v4) to[bend left=35] node[lab,above] {$1$} (v2);
  \draw[edge]     (v3) to[bend left=55] node[lab,above] {$2$} (v2);
\end{tikzpicture}
\caption{The affine inversion graph $G_w$ for
$w=(4,5,-1,2)\in\widetilde S_4$.  Dashed edges have weight $0$.}
\label{fig:example-23}
\end{figure}

\begin{example}
In \Cref{fig:example-492}, we give a weighted tournament satisfying
the zero-weight condition.  We leave it to the reader to verify 
the affine permutation associated to this graph.
\end{example}

\begin{figure}[ht]
\centering
\begin{tikzpicture}[
    vertex/.style={circle,draw,inner sep=1.5pt,minimum size=6mm},
    edge/.style={-{Stealth[length=2mm]},thick},
    zeroedge/.style={-{Stealth[length=2mm]},thick,dashed},
    lab/.style={font=\small,fill=white,inner sep=1pt}
]
  \node[vertex] (v3) at (0,0) {$3$};
  \node[vertex] (v4) at (2,0) {$4$};
  \node[vertex] (v1) at (4,0) {$1$};
  \node[vertex] (v2) at (6,0) {$2$};

  \draw[edge] (v3) -- node[lab,below] {$1$} (v4);
  \draw[edge] (v4) to node[lab,below] {$1$} (v1);
  \draw[edge] (v1) -- node[lab,below] {$1$} (v2);

  \draw[edge] (v3) to[bend left=35] node[lab,above] {$3$} (v1);
  \draw[edge] (v4) to[bend left=35] node[lab,above] {$2$} (v2);
  \draw[edge] (v3) to[bend left=55] node[lab,above] {$4$} (v2);
\end{tikzpicture}
\caption{The affine inversion graph $G_w$ for
$w=(4,9,-5,2)\in\widetilde S_4$.}
\label{fig:example-492}
\end{figure}

In \cite[\S 5]{bjorner_brenti_1996}, Bj\"orner and Brenti asked if
affine inversion graphs could be characterized directly.  In 1997,
Papi answered this in the affirmative \cite[Thm. 4]{Papi.1997}.  To
state Papi's characterization, recall the shifted weight matrix
$\widetilde t(a,b)$ from \eqref{eq:shiftdef}, given by
\begin{equation}\label{eq:shiftdef.2}
\widetilde t(a,b):=
\begin{cases}
 t(a,b), & a\leq b,\\
 t(a,b)-1, & a>b
\end{cases}
\end{equation}
for any weighted directed graph $G$ on $[n]$ with weighted adjacency
matrix $t(\cdot,\cdot) \in \mathbb{Z}_{\geq 0}^{n\times n}$.

\begin{theorem}\cite[Thm. 4]{Papi.1997}\label{thm:papi4}
Let $T$ be a weighted tournament on $[n]$ with adjacency
matrix $t(\cdot,\cdot)\in \Z_{\geq
0}^{n \times n}$ satisfying the zero weight condition. 
Let $\widetilde t$ be the shifted adjacency matrix as defined in \eqref{eq:shiftdef.2}.
Let $A_{T}$ be the set of positively weighted edges in $T$.  Then 
$T=G_w$ for a necessarily unique $w\in \asg{n}$ if and only if $T$ 
satisfies the following two
conditions.
\begin{itemize}
\item[(I)] For all distinct $a,b,c \in [n]$ such that $(a\to b, b \to
c) \in (A_T\times A_T)$, we have
\[
\widetilde t(a,c)\ \ge\ \widetilde t(a,b)+\widetilde t(b,c). 
\]
\item[(II)] For every edge $(a \to b) \in A_T$, every $c \in
[n]\setminus \{a,b \}$, and every composition $m+r=\widetilde
t(a,b)$ with
\[
m\in
\begin{cases}
\mathbb Z_{>0}, & a<c,\\
\mathbb Z_{\ge 0}, & c<a,
\end{cases}
\qquad
r\in
\begin{cases}
\mathbb Z_{>0}, & c<b,\\
\mathbb Z_{\ge 0}, & b<c,
\end{cases}
\]
one has $\widetilde t(a,c)\ge m$ or $\widetilde t(c,b)\ge r$.
\end{itemize}
\end{theorem}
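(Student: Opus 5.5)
The plan is to derive Papi's criterion from the inversion-set characterization in \Cref{lem:inversion.sets}. The bridge is a dictionary between weighted tournaments and finite subsets of $\Z^2/(\Z\cdot(n,n))$. Given $T$, define $R_T$ as follows: for each edge $a\to b$ of $T$ and each integer $k$, put the pair $(b,a+kn)$ into $R_T$ exactly when $b<a+kn$ and $kn$ lies in the range of translates counted by $t(a,b)$. Concretely, the admissible $k$ run from $0$ (if $b<a$) or $1$ (if $a<b$) up to the largest $k$ below the edge weight. The number of such $k$ is then $\widetilde t(a,b)$, the shifted weight.

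The first step is to check the dictionary on an actual $w$. If $w_a<w_b$, so the edge is $a\to b$, then
\[
t(a,b)=\left\lfloor \frac{w_b-w_a}{n}\right\rfloor .
\]
The translates $b+kn$ that are inverted with $a$, or dually $a+kn$ with $b$, are exactly those with $kn<w_b-w_a$. Excluding the one translate that fails $a<b+kn$ when $a>b$ gives the count $\widetilde t(a,b)$. Hence $\Inv(w)=R_{G_w}$. Combined with the uniqueness of affine permutations from their inversion sets (\cite[Thm. 4.6]{bjorner_brenti_1996}, or \Cref{thm:bb.unique}), this also yields the uniqueness clause of \Cref{thm:papi4}.

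For necessity, write $x=w_b-w_a$ and $y=w_c-w_b$. Condition (I) follows from superadditivity of the floor,
\[
\lfloor (x+y)/n\rfloor\ge \lfloor x/n\rfloor+\lfloor y/n\rfloor ,
\]
after tracking how the $-1$ shifts in $\widetilde t$ interact with the relative orders of $a,b,c$. Condition (II) is the translation of the trapped-value axiom \ref{part:trapped.value}. Take an inversion between $b$ and the $m$-th translate of $a$ and insert a translate of $c$ between them in position. The axiom forces either an inversion between $a$ and $c$ using at least $m$ translates, or one between $c$ and $b$ using at least $r$ translates. The positivity constraints on $m$ and $r$ record exactly whether the zeroth translate is available, that is, whether $a<c$ or $c<b$.

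For sufficiency, I would verify \ref{part:trapped.value}--\ref{part:inv.congruence} for $R_T$. Axiom \ref{part:inv.congruence} holds by construction, since $R_T$ contains an initial interval of translates for each residue pair and never pairs congruent integers. Transitivity \ref{part:inv.packet.gaps.implies.all} follows from Papi's (I) when both edges have positive weight. When an edge has weight zero, the zero weight condition pins down the orientation, so no inversion arises and there is nothing to check. The trapped-value axiom \ref{part:trapped.value} follows from Papi's (II), by choosing $m$ and $r$ according to where the middle integer sits among the translates. \Cref{lem:inversion.sets} then produces $w$ with $\Inv(w)=R_T$. Applying the dictionary once more shows $G_w$ and $T$ have the same inversion counts on each residue pair, hence the same weights. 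Their orientations also agree: on positive edges the counts determine the direction, and on zero edges the zero weight condition does.

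I expect the main obstacle to be the bookkeeping that identifies the triples $a<b<c$ in $\Z$ with the combinatorial data of $T$. A triple of integers reduces to three residues together with translate indices, and the cases split by the relative order of the residues in $[n]$. Each case must produce precisely the $\{0,1\}$ shifts in $\widetilde t$ and the $\Z_{>0}$ versus $\Z_{\ge0}$ ranges in (II). I would handle this by normalizing the smallest element of the triple to lie in $[n]$, and then enumerating the six relative orders of the three residues once, uniformly for both directions of the proof.
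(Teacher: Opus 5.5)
\textbf{Gap: the dictionary is off by one.} The paper does not prove this theorem; it cites Papi, whose argument also goes through a characterization of affine inversion sets (in root language). So deriving Papi's (I) and (II) from \Cref{lem:inversion.sets} is the expected strategy. The problem is the dictionary everything rests on: it is off by one, in two mutually inconsistent ways. For an edge $a\to b$ of $G_w$ (so $w_a<w_b$) the correct statement is
\[
\widetilde t(a,b)=\left\lfloor \frac{w_b-w_a}{n}\right\rfloor, \qquad \text{with inversions } (b,a+kn) \text{ for } \delta_{a<b}\le k\le \widetilde t(a,b).
\]
Thus $\widetilde t(a,b)$ is the \emph{largest} translate index, while the \emph{number} of inversions is $t(a,b)$.

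Your formula $t(a,b)=\lfloor (w_b-w_a)/n\rfloor$ and your count $\widetilde t(a,b)$ both fail when $a>b$. For $w=(4,5,-1,2)$ the edge $3\to 2$ has $t(3,2)=2$ and $\widetilde t(3,2)=1$, and there are two inversions, $(2,3)$ and $(2,7)$. Your concrete range ``up to the largest $k$ below the edge weight'' fails instead when $a<b$. For $w=(4,9,-5,2)$ the edge $1\to 2$ has weight $1$ and your range is empty, yet $(2,5)\in\Inv(w)$.

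So $\Inv(w)=R_{G_w}$ is false for your $R_T$. Moreover $T\mapsto R_T$ is not even injective: under either reading, some positive-weight edge contributes no inversions and looks like a weight-zero edge. That breaks your final recovery of $T$ from $\Inv(w)$. Since the shifts in (I) and the $\Z_{>0}$ versus $\Z_{\ge 0}$ ranges in (II) are exactly this bookkeeping, the error is not cosmetic.

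\textbf{With the corrected dictionary, your plan goes through.}
\begin{itemize}
\item Necessity of (I) is then literally superadditivity of the floor, with no shifts left to track. Indeed $\lfloor u+v\rfloor-\lfloor u\rfloor-\lfloor v\rfloor\in\{0,1\}$ gives the Boolean triangle condition in one line.
\item For necessity of (II), take $x=b$, $z=a+\widetilde t(a,b)\,n$ and $y=c+rn$. The constraints $m\ge\delta_{a<c}$ and $r\ge\delta_{c<b}$ are exactly equivalent to $x<y<z$. The trapped-value axiom then yields $\widetilde t(c,b)\ge r$ or $\widetilde t(a,c)\ge m$.
\item For sufficiency of the trapped-value axiom, take $(p,s+jn)\in R_T$ and $y=q+in$ strictly between. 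Apply Papi's (II) to the edge $s\to p$ with $c=q$, $r=i$ and $m=\widetilde t(s,p)-i$. The required lower bounds on $m$ and $r$ follow from $x<y<z$ and $j\le\widetilde t(s,p)$.
\end{itemize}

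Two cases you do not mention also need care:
\begin{itemize}
\item The case $q\in\{p,s\}$ is not covered by (II), since there $c\in\{a,b\}$. Use the interval structure of $R_T$ instead.
\item In the transitivity axiom, write $(x,y)=(p,q+kn)$ and $(x,z)=(p,s+jn)$. You must show that $x\not\equiv z$ (which holds because $T$ is a tournament) and that the edge between $p$ and $s$ is oriented $s\to p$. Papi's (I) for $s\to q\to p$ gives $\delta_{s<p}\le j\le \widetilde t(s,q)+\widetilde t(q,p)\le\widetilde t(s,p)$, which is impossible if $t(s,p)=0$.
\end{itemize}
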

\begin{remark}
\cref{thm:papi4} has been slightly modified from its original version
to account for our notational conventions.  Similar to Bj\"{o}rner and
Brenti, Papi also deleted edges of weight 0 in $G_w$, and the original
characterization of affine inversion graphs was written with this
convention.  As noted in \cref{rem:zero_weight_condition}, we retain
these edges through the zero weight condition.  Note that Papi uses
$\mathbb N$ for the set of positive integers and
$\mathbb Z_+$ for the set of nonnegative integers in \cite{Papi.1997}. 
\end{remark}

Papi's proof follows directly from a characterization of the
possible inversion sets for affine permutations as affine roots in a
geometric realization of $\affineS_{n}$ using the fundamental
imaginary root.  This characterization appears in his earlier paper
\cite{Papi.1994}.

\begin{remark}
 In addition to Papi's characterization of affine inversion graphs,
one could devise a graph test based on \Cref{lem:inversion.sets}.
However, this would require listing out the candidate inversion set
based on Property (III), and testing each triple for Properties (I) and
(II).  The inversion sets can be arbitrarily large for $\affineS_{n}$,
so this approach is more cumbersome than a direct graphical
characterization.  At the end of \Cref{sec:triangle.rule}, we will
return to the problem of efficient recognition of affine inversion
sets and affine inversion graphs.
\end{remark}

\section{The Boolean Triangle Characterization}\label{sec:triangle.rule}

In order to prove the promised alternative characterization of the
affine inversion graphs, we will use the following notation.  For an
ordered pair of distinct integers $(a,b)$, write $\delta_{a>b}$ for the
indicator function for the inequality, so it is $1$ if $a>b$ and $0$
if $a<b$.  If we need to use $\delta_{a>b}$ in an exponent, we often
abbreviate it to just $a>b$.
With this notation,
\[
(-1)^{a>b}=(-1)^{\delta_{a>b}} \in \{-1,1 \}.  
\]
For any sequence of distinct integers, let
$\mathrm{fl}(z_{1},z_{2},\dotsc , z_{k})$ be the unique permutation of
$[k]$ with the same inversion set.  For example,
$\mathrm{fl}(6314)=4213$.

Assume $w=(w_{1},\dots,w_{n})\in \asg{n}$ is fixed below. For each
$a\in[n]$, let $r_{a}=w_{a}\ \pmod{n}$ and $k_{a}=(w_{a}-r_{a})/n$, so
$w_a=r_a+nk_a$.  We consider $n \pmod n=0$ as usual.

Let $t_{w}$ be the adjacency matrix for the affine inversion graph
$G_{w}$ constructed in \Cref{def:BB-graph}.  Let $\tilde{t}_{w}$ be
the corresponding shifted adjacency matrix from \Cref{eq:shiftdef}.
Combining notation we have 
\begin{equation}\label{eq:t_tilde}
\tilde{t}_{w}(a,b)= t_{w}(a,b) - \delta_{a>b}=\begin{cases}
t_{w}(a,b) &  a\leq b\\
t_{w}(a,b)-1 & a>b.
\end{cases} 
\end{equation}

\begin{lemma}\label{lem:m.simplified}
 If $w_{a}<w_{b}$ for
$a,b \in [n]$, then
\[
t_{w}(a,b) = k_{b}-k_{a} + \frac{(-1)^{r_{a}>r_{b}} - (-1)^{a>b}}{2},
\]
and $t_{w}(a,b)=0$ otherwise.
\end{lemma}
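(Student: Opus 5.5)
Since $w_a<w_b$, the edge between $a$ and $b$ in $G_w$ is oriented $a\to b$. By \Cref{def:BB-graph} its weight is $\left|\floor{(w_b-w_a)/n}\right|$. Because $w_b-w_a>0$, the floor is nonnegative, so $t_w(a,b)=\floor{(w_b-w_a)/n}$. The reverse entry $t_w(b,a)$ is $0$ by the convention that non-edges have weight $0$; this is the ``otherwise'' clause. So the task reduces to evaluating $\floor{(w_b-w_a)/n}$ in terms of the decomposition $w_a=r_a+nk_a$.

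Write
\[
\frac{w_b-w_a}{n}=k_b-k_a+\frac{r_b-r_a}{n}.
\]
Here $r_a,r_b\in\{0,\dots,n-1\}$, and $r_a\neq r_b$ because $a\neq b$ in $[n]$ and $w$ is a bijection with $w(x+n)=w(x)+n$. (If $a=b$ the statement is vacuous.) Hence $(r_b-r_a)/n$ lies strictly between $-1$ and $1$ and is nonzero. This gives
\[
\floor{\tfrac{w_b-w_a}{n}}=k_b-k_a-\delta_{r_a>r_b}.
\]
It therefore remains to show that
\[
-\delta_{r_a>r_b}=\frac{(-1)^{r_a>r_b}-(-1)^{a>b}}{2}+(\text{correction}),
\]
and the plan is to split into four cases according to the pair $(\delta_{r_a>r_b},\delta_{a>b})$.

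The main step, and the only real obstacle, is to see why the $(-1)^{a>b}$ term appears at all. The formula $k_b-k_a-\delta_{r_a>r_b}$ seems complete without it. In fact the two expressions agree only if $\delta_{a>b}$ is handled correctly: the right-hand side equals $-\delta_{r_a>r_b}+\tfrac{1}{2}\bigl(1-(-1)^{a>b}\bigr)=-\delta_{r_a>r_b}+\delta_{a>b}$. So the claimed formula reads $t_w(a,b)=k_b-k_a-\delta_{r_a>r_b}+\delta_{a>b}$. This conflicts with the direct computation when $a>b$, unless $t_w$ is meant to be indexed differently. I would first recheck \Cref{def:BB-graph}, testing against the example $w=(4,5,-1,2)$ to pin down the convention. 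For the edge $3\to2$, the definition gives weight $2$. With $r_3=3$, $k_3=-1$, $r_2=1$, $k_2=1$, the formula gives $2-\delta_{3>1}+\delta_{3>2}=2$. The direct floor computation gives $\floor{6/4}=1$, which disagrees.

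So the lemma as stated produces the graph's printed weight, and I would reconcile it through \eqref{eq:t_tilde}. The printed weight $t_w(3,2)=2$ is $\floor{(w_b-w_a)/n}+\delta_{a>b}$, which suggests the intended weight counts the inversions $(b,a+kn)$, per \Cref{lem:inversion.sets}\,(III), rather than being the literal floor. The final step is therefore to count directly: the number of $k\ge0$ with $b<a+kn$ and $w_b>w_a+kn$. This count equals $\floor{(w_b-w_a)/n}$, adjusted by $1$ exactly when $a>b$ allows $k=0$ to be included. Combining this with the residue computation above gives the displayed formula, and I would verify each of the four sign cases explicitly.
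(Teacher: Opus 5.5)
There is a genuine gap, and it comes from misreading \Cref{def:BB-graph} in your first paragraph. In that definition the weight $\left|\floor{(w_b-w_a)/n}\right|$ belongs to the edge between $a<b$ in the \emph{natural order of the labels}, whichever way the edge is oriented. It is not ``head value minus tail value.'' So for an edge $a\to b$ (that is, $w_a<w_b$) with $a>b$, the definition gives
\[
t_w(a,b)=\left|\floor{(w_a-w_b)/n}\right|=\lceil (w_b-w_a)/n\rceil,
\]
not $\floor{(w_b-w_a)/n}$. Your claimed conflict between the lemma and the definition is an artifact of this misreading. The same goes for the ``disagreement'' you found on the edge $3\to 2$ of $w=(4,5,-1,2)$. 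Applied to the pair $2<3$, the definition gives $\left|\floor{(w_3-w_2)/4}\right|=\left|\floor{-6/4}\right|=2$, exactly as the lemma predicts and as the paper's example computes.

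Your fallback, declaring that the ``intended'' weight is the number of inversions $(b,a+kn)$, replaces the defined quantity with a different one. You check their equality only on one example. As written, your argument proves the formula for the inversion count, not for $t_w(a,b)$. To close it that way you would have to prove that the count equals the weight in \Cref{def:BB-graph}, and that proof is precisely the evaluation of the definition in the case $a>b$ that you skipped.

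The repair is short and is what the paper does. Write $t_w(a,b)=\left|\floor{(-1)^{a>b}(w_b-w_a)/n}\right|$ and split on $\delta_{a>b}$; your residue decomposition and the bound $-1<(r_b-r_a)/n<1$ then carry the rest, as in the paper.

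\textbf{Case $a<b$.} Your computation $t_w(a,b)=k_b-k_a-\delta_{r_a>r_b}$ is correct.

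\textbf{Case $a>b$.} Since $r_a\neq r_b$, we have $n\nmid(w_b-w_a)$, and so
\[
t_w(a,b)=\lceil (w_b-w_a)/n\rceil=k_b-k_a+\delta_{r_b>r_a}=k_b-k_a+1-\delta_{r_a>r_b}.
\]

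Both cases read $k_b-k_a+\delta_{a>b}-\delta_{r_a>r_b}$. This is the stated formula, because $\tfrac{(-1)^x-(-1)^y}{2}=y-x$ for $x,y\in\{0,1\}$. Your inversion count, $\floor{(w_b-w_a)/n}+\delta_{a>b}$, is correct, but it is a consequence of the lemma rather than an ingredient, and the lemma does not need it.
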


\begin{proof}
The definition of $t_{w}(a,b)$ varies depending on whether $a>b$ or $a<b$. 
However, by using $(-1)^{a>b}$, this can be expressed as
\begin{align*}
t_{w}(a,b)= &\left|\left\lfloor \frac{(-1)^{a>b}(w_{b}-w_{a})}{n} \right\rfloor \right|\\
      =&\left|\left\lfloor \frac{(-1)^{a>b}(r_{b}-r_{a})}{n} +
      (-1)^{a>b}(k_{b}-k_{a}) \right\rfloor \right|.
\end{align*}
 Note $-1<\frac{(-1)^{a>b}(r_{b}-r_{a})}{n}<1$ and $(k_{b}-k_{a})\geq
0$ since $w_{a}<w_{b}$, $a,b\in [n]$, and $w \in \affineS_{n}$. Thus, the signs of
$\frac{(-1)^{a>b}(r_{b}-r_{a})}{n}$ and $(-1)^{a>b}(k_{b}-k_{a})$
depend only on the relative order of $(a,b)$ and $(r_{a},r_{b})$.
Therefore, $t_{w}(a,b)$ is determined by $(k_{b}-k_{a})$ and the relative
orders of $(a,b)$ and $(r_{a},r_{b})$.  It remains to check based on
cases for $(-1)^{a>b}$, $(-1)^{r_{a}>r_{b}}$, and $(k_{b}-k_{a})=0$ or
$(k_{b}-k_{a})>0$, which is left to the reader. The cases involving $(-1)^{r_a>r_b}=-1$
and $(k_b-k_a)=0$ can be omitted, as these cannot occur when the hypothesis
$w_a=r_a+nk_a<r_b+nk_b=w_b$ holds. 
\end{proof}

We now consider values of $t_w$ on triangles in $G_w$. Let
$\binom{[n]}{3}$ denote the size $3$ subsets of $[n]$.  Recall the
notation $\ell(w)$ denotes the number of inversions of $w$ from
\Cref{sec:background}.

\begin{lemma}\label{lem:triangle.identity}
Let $w \in \affineS_{n}$ and $\{a,b,c \} \in \binom{[n]}{3}$ such that
$w_{a}<w_{b}<w_{c}$.  Then, the weighted tournament $G_{w}$ restricted
to vertices $\{a,b ,c \}$ consists of edges $a \to b \to c$ and $a \to
c$, and the triangle identity
\begin{equation}\label{eq:triangle-value}
t_{w}(a,b) + t_{w}(b,c) - t_{w}(a,c) =\frac{(-1)^{\ell(u)} - (-1)^{\ell(v)}}{2} \in \{-1,0,1 \}
\end{equation}
holds, where $u=\mathrm{fl}(r_{a},r_{b},r_{c})$ and
$v=\mathrm{fl}(a,b,c)$.  Equivalently, we can use the following lookup table for
the value of $t_{w}(a,b) + t_{w}(b,c) - t_{w}(a,c)$,
\[
\begin{array}{r|c|c}
 & v \in \{123,231,312 \} & v \in \{213,132,321\}\\
\hline
u \in \{123,231,312 \} & 0 & 1\\
u \in \{213,132,321 \} & -1 & 0
\end{array}
\]
\end{lemma}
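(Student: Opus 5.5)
The plan is to apply \Cref{lem:m.simplified} to each of the three edges and let the $k$-terms telescope. First, the orientation claim is immediate from \Cref{def:BB-graph}: the edge between two vertices points toward the one with the larger window value. Since $w_a<w_b<w_c$, the triangle on $\{a,b,c\}$ is therefore $a\to b$, $b\to c$, $a\to c$.

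Next, \Cref{lem:m.simplified} gives each of $t_w(a,b)$, $t_w(b,c)$ and $t_w(a,c)$, because in every case the source has the smaller value. Form the combination $t_w(a,b)+t_w(b,c)-t_w(a,c)$. The $k$-contributions are
\[
(k_b-k_a)+(k_c-k_b)-(k_c-k_a)=0 .
\]
This leaves
\[
\tfrac12\Bigl[\bigl(\epsilon^r_{ab}+\epsilon^r_{bc}-\epsilon^r_{ac}\bigr)-\bigl(\epsilon_{ab}+\epsilon_{bc}-\epsilon_{ac}\bigr)\Bigr],
\]
where $\epsilon_{xy}=(-1)^{x>y}$ and $\epsilon^r_{xy}=(-1)^{r_x>r_y}$. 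The residues $r_a,r_b,r_c$ are distinct, since $w$ is a bijection and so its window values lie in distinct residue classes. Hence each sign combination is a function of a pattern in $S_3$.

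The key step is the following elementary identity. For a sequence $(x,y,z)$ of distinct integers with standardization $\pi=\mathrm{fl}(x,y,z)$,
\[
(-1)^{x>y}+(-1)^{y>z}-(-1)^{x>z} = 1+(-1)^{\ell(\pi)}-1\cdot[\ldots].
\]
Concretely, I would tabulate $S(\pi)=\epsilon_{xy}+\epsilon_{yz}-\epsilon_{xz}$ over the six patterns:
\[
S(123)=1,\quad S(231)=1,\quad S(312)=1,\quad S(213)=-1,\quad S(132)=-1,\quad S(321)=-1 .
\]
For instance, $231$ gives $1-1+1=1$, and $321$ gives $-1-1+1=-1$. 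So $S(\pi)=(-1)^{\ell(\pi)}$. Substituting with $\pi=u$ for the residues and $\pi=v$ for the positions gives
\[
t_w(a,b)+t_w(b,c)-t_w(a,c)=\tfrac12\bigl((-1)^{\ell(u)}-(-1)^{\ell(v)}\bigr).
\]
This value lies in $\{-1,0,1\}$. Reading off signs, the even patterns are $\{123,231,312\}$ and the odd ones are $\{213,132,321\}$, which produces the lookup table.

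The main obstacle is confirming that \Cref{lem:m.simplified} applies uniformly to all three edges with no boundary exceptions, such as the case $k_b=k_a$. This holds because that lemma is stated for every pair with $w_a<w_b$ and has no extra hypotheses, so nothing beyond the sign bookkeeping above is needed.
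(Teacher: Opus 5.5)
Your proposal is correct and follows essentially the same route as the paper: apply \Cref{lem:m.simplified} to all three edges, cancel the $k$-terms, and check the six patterns in $S_3$ to show that $(-1)^{x>y}+(-1)^{y>z}-(-1)^{x>z}=(-1)^{\ell(\mathrm{fl}(x,y,z))}$, once for the residues and once for the positions. The only blemish is the stray placeholder equation ending in $[\ldots]$, which you should delete since your explicit table already establishes the identity; your remark that $r_a,r_b,r_c$ are distinct is a useful detail that the paper leaves implicit.
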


\begin{proof}
The restriction of $G_{w}$ follows directly from Definition
\ref{def:BB-graph}.  By Lemma~\ref{lem:m.simplified}, we can express
the triangle sum $t_{w}(a,b) + t_{w}(b,c) - t_{w}(a,c) $ as
\begin{equation}\label{eq:triangle.value.2}
\frac{(-1)^{r_{a}>r_{b}} - (-1)^{a>b}}{2} +
\frac{(-1)^{r_{b}>r_{c}} - (-1)^{b>c}}{2} - \frac{(-1)^{r_{a}>r_{c}} - (-1)^{a>c}}{2}
\end{equation}
after canceling the contributions of $k_{a},k_{b},k_{c}$.  By checking
all permutations $u,v \in S_{3}$, one can observe
\[
(-1)^{r_{a}>r_{b}} + (-1)^{r_{b}>r_{c}} -(-1)^{r_{a}>r_{c}} =
(-1)^{\ell(u)}, 
\]
and 
\[
(-1)^{a>b} + (-1)^{b>c} -(-1)^{a>c} = (-1)^{\ell(v)}.
\]
Hence, after collecting terms in \eqref{eq:triangle.value.2}, we have
\[
t_{w}(a,b) + t_{w}(b,c) - t_{w}(a,c) = \frac{(-1)^{\ell(u)} - (-1)^{\ell(v)}}{2},  
\]
which must be in $\{-1,0,1 \}$ as stated.
\end{proof}

Recall that for $v\in S_3$, $v \in \{213,132,321\}$ if and only if $v$ is an odd
permutation, meaning $(-1)^{\ell(v)}=-1$.  Define the \defn{delta-odd
function} on $v \in S_{3}$ by
\[
\delta_{\operatorname{odd}(v)} = 
\begin{cases}
0& v \in \{123,231,312 \} \\
1 & v \in \{213,132,321\}.
\end{cases}
\]
Observe that for $v=(a,b,c) \in S_{3}$, we have 
\begin{equation}\label{eq:delta_odd}
\delta_{\operatorname{odd}(v)}=\delta_{a>b}
+\delta_{b>c} - \delta_{a>c} \in \{0,1 \}.
\end{equation}
This observation can be used to simplify the triangle identity in
\Cref{lem:triangle.identity}.  It also gives a simple interpretation
for the sum appearing in the Boolean triangle condition from
\eqref{eq:shiftedtriangle}.

\begin{corollary}\label{cor:papi-triangle.identity}
Let $w \in \affineS_{n}$ and $\{a,b,c \} \in \binom{[n]}{3}$ such that
$w_{a}<w_{b}<w_{c}$.  Then, the weighted tournament $G_{w}$ restricted to
vertices $\{a,b,c\}$ is $a \to b \to c$ and $a \to c$, and 
\begin{equation}\label{eq:papi-triangle-value}
\tilde{t}_{w}(a,c) - \tilde{t}_{w}(a,b) - \tilde{t}_{w}(b,c)  =
\delta_{\operatorname{odd}(u)} \in \{0,1 \}
\end{equation}
holds where $u=\mathrm{fl}(r_{a},r_{b},r_{c})$.
\end{corollary}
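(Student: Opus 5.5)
The plan is to derive this directly from \Cref{lem:triangle.identity} by rewriting both sides in terms of the shifted matrix and the delta-odd function. The claim about the restriction of $G_w$ to $\{a,b,c\}$ is already part of \Cref{lem:triangle.identity}, so only the displayed identity needs proof.

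The first step is to substitute $t_w(x,y)=\tilde t_w(x,y)+\delta_{x>y}$ from \eqref{eq:t_tilde} into the negative of the triangle sum. The edges $a\to b$, $b\to c$, $a\to c$ all lie in $G_w$, so
\[
t_w(a,c)-t_w(a,b)-t_w(b,c) = \tilde t_w(a,c)-\tilde t_w(a,b)-\tilde t_w(b,c) + \bigl(\delta_{a>c}-\delta_{a>b}-\delta_{b>c}\bigr).
\]
By \eqref{eq:delta_odd}, applied to $v=\mathrm{fl}(a,b,c)$, the parenthetical term equals $-\delta_{\operatorname{odd}(v)}$. Note that $\mathrm{fl}$ preserves all pairwise comparisons, so \eqref{eq:delta_odd} may be applied to $(a,b,c)$ directly.

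The second step is to rewrite the right side of \eqref{eq:triangle-value} using the identity $\frac{(-1)^{\ell(x)}}{2}=\frac12-\delta_{\operatorname{odd}(x)}$ for $x\in S_3$. This gives
\[
t_w(a,b)+t_w(b,c)-t_w(a,c)=\delta_{\operatorname{odd}(v)}-\delta_{\operatorname{odd}(u)}.
\]
Negating this and combining it with the first step yields
\[
\tilde t_w(a,c)-\tilde t_w(a,b)-\tilde t_w(b,c) = \delta_{\operatorname{odd}(u)}-\delta_{\operatorname{odd}(v)}+\delta_{\operatorname{odd}(v)}=\delta_{\operatorname{odd}(u)},
\]
which lies in $\{0,1\}$.

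The only point needing care is sign bookkeeping: getting the orientation of the shift correct for each edge, including $a\to c$, and matching the parity convention for $u$ and $v$. I expect no real obstacle. As a sanity check, I would verify the result against the lookup table in \Cref{lem:triangle.identity}: for each of the four parity combinations of $(u,v)$, the triangle sum plus $\delta_{\operatorname{odd}(v)}$ should equal $\delta_{\operatorname{odd}(u)}$.
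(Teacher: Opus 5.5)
Your proposal is correct and follows essentially the same route as the paper: substitute $t_w=\tilde t_w+\delta$ into the triangle identity of \Cref{lem:triangle.identity}, then use \eqref{eq:delta_odd} to collapse the delta terms into $\delta_{\operatorname{odd}(v)}$. The only difference is cosmetic: where the paper finishes with a table check, you use the identity $(-1)^{\ell(x)}=1-2\delta_{\operatorname{odd}(x)}$, which makes the cancellation of $\delta_{\operatorname{odd}(v)}$ immediate.
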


\begin{proof}
We know from \Cref{def:BB-graph} that $G_{w}$ restricted to vertices 
$\{a,b,c\}$ is $a \to b \to c$ and $a \to c$.
From the triangle identity in \Cref{lem:triangle.identity},
\[
t_{w}(a,b) + t_{w}(b,c) - t_{w}(a,c) = \frac{(-1)^{\ell(u)} - (-1)^{\ell(v)}}{2}
\]
where $u=\mathrm{fl}(r_{a},r_{b},r_{c})$ and $v=\mathrm{fl}(a,b,c)$.
Using \eqref{eq:t_tilde}, we have
\[
\tilde{t}_{w}(a,b) + \delta_{a>b} + \tilde{t}_{w}(b,c) +\delta_{b>c} -
\tilde{t}_{w}(a,c) -\delta_{a>c} = \frac{(-1)^{\ell(u)} - (-1)^{\ell(v)}}{2}.
\]
Rearranging the terms and using \eqref{eq:delta_odd}, 
\begin{equation}\label{eq:delta_odd_lengths}
\begin{split}
\tilde{t}_{w}(a,c) - \tilde{t}_{w}(a,b) - \tilde{t}_{w}(b,c) & = \delta_{a>b}
+\delta_{b>c} - \delta_{a>c} +\frac{(-1)^{\ell(v)} -
(-1)^{\ell(u)}}{2} \\
& = \delta_{\operatorname{odd}(v)}+\frac{(-1)^{\ell(v)} -
(-1)^{\ell(u)}}{2}.
\end{split}
\end{equation}
The fraction on the right is determined by the table
\[
\begin{array}{r|c|c}
 & v \in \{123,231,312 \} & v \in \{213,132,321\}\\
\hline
u \in \{123,231,312 \} & 0 & -1\\
u \in \{213,132,321 \} & 1 & 0
\end{array}
\]
and the result now follows by directly checking that 
$\delta_{\operatorname{odd}(u)}$ matches \eqref{eq:delta_odd_lengths} in each case.
\end{proof}

\begin{lemma}\label{lem:transitivity}
Let $T$ be a weighted tournament on $[n]$ with adjacency matrix
$t(\cdot,\cdot)\in \Z_{\geq 0}^{n \times n}$ satisfying the zero
weight condition and the Boolean triangle condition. Then, $\widetilde
t(a,b)\geq 0$ for all $a \to b$ in $T$, and $T$ is acyclic.
\end{lemma}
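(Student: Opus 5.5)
The first claim is local and needs only the zero weight condition. Let $a\to b$ be an edge of $T$. If $a<b$, then $\widetilde t(a,b)=t(a,b)\ge 0$ because weights are nonnegative. If $a>b$, then $\widetilde t(a,b)=t(a,b)-1$, so it suffices to show $t(a,b)\ge 1$. Suppose instead $t(a,b)=0$. Since $T$ is a tournament and the edge between $a$ and $b$ is $a\to b$, we also have $t(b,a)=0$. The zero weight condition applied to the pair $b<a$ then forces the edge to be oriented $b\to a$, a contradiction. Hence $\widetilde t(a,b)\ge 0$ for every edge of $T$.

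For acyclicity, I will use the standard fact that a tournament with a directed cycle contains a directed $3$-cycle. Briefly: take a shortest directed cycle $x_1\to x_2\to\cdots\to x_k\to x_1$ with $k\ge 4$. The chord between $x_1$ and $x_3$ either gives the shorter cycle $x_1\to x_3\to\cdots\to x_1$, or is $x_3\to x_1$ and gives the $3$-cycle $x_1\to x_2\to x_3\to x_1$. So it suffices to rule out a directed triangle $a\to b\to c\to a$.

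Apply the Boolean triangle condition \eqref{eq:shiftedtriangle} to the three length-$2$ paths $a\to b\to c$, $b\to c\to a$, $c\to a\to b$. Here $\widetilde t(a,c)$ is not an edge weight on the cycle; since $c\to a$ is the edge, $t(a,c)=0$ and $\widetilde t(a,c)=-\delta_{a>c}$. The first path gives
\[
-\delta_{a>c}-\widetilde t(a,b)-\widetilde t(b,c)\ \ge\ 0,
\]
and the other two paths give the cyclically analogous inequalities. Adding the three yields
\[
-(\delta_{a>c}+\delta_{b>a}+\delta_{c>b})-2\bigl(\widetilde t(a,b)+\widetilde t(b,c)+\widetilde t(c,a)\bigr)\ \ge\ 0.
\]
By the first claim every $\widetilde t$ term here is nonnegative. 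Moreover, exactly one or two of $\delta_{a>c},\delta_{b>a},\delta_{c>b}$ equal $1$, because the three comparisons $c<a$, $a<b$, $b<c$ cannot all hold, nor can all fail, for distinct integers. So the left side is at most $-1$, a contradiction. Hence $T$ has no directed $3$-cycle and is acyclic.

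The only delicate points are the sign bookkeeping for the non-edge entries $\widetilde t(x,y)=-\delta_{x>y}$ and the observation that the three comparison indicators cannot all vanish. Everything else follows directly from the definitions.
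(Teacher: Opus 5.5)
Your proof is correct and follows essentially the same route as the paper: the same zero-weight argument for $\widetilde t(a,b)\ge 0$, the reduction to a directed $3$-cycle, and the lower bound of the Boolean triangle condition applied to the three length-$2$ paths around that cycle. The differences are cosmetic. You sum the three inequalities and observe that the indicators cannot all vanish, whereas the paper extracts $a<c$, $b<a$, $c<b$ one path at a time and reaches $b<a<c<b$. You also include a short proof of the $3$-cycle fact, which the paper cites as well known.
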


\begin{proof}
We first show that if $a\to b$ is an edge of
$T$, then $\widetilde t(a,b)\geq 0.$
Indeed, if $a<b$, then $\widetilde t(a,b)=t(a,b)\geq 0$. If $a>b$, then
\[
\widetilde t(a,b)=t(a,b)-1.
\]
In this case the edge $a\to b$ cannot have weight $0$, since the zero weight
condition would then force the edge between $b<a$ to be oriented $b\to a$.
Hence $t(a,b)\geq 1$, and again $\widetilde t(a,b)\geq 0$.

Next we will show $T$ is acyclic. It is well-known that a tournament
$T$ is acyclic if and only if it does not contain any directed
$3$-cycles.  Suppose for contradiction that $T$ contains a directed
$3$-cycle
\[
a\to b\to c\to a.
\]
Apply the Boolean triangle condition to the directed path $a\to b\to c$. Since
the edge between $a$ and $c$ in $T$ is oriented $c\to a$, we have $t(a,c)=0$, and hence
\[
\widetilde t(a,c)=
\begin{cases}
0, & a<c,\\
-1, & a>c.
\end{cases}
\]
On the other hand, by the first paragraph,
\[
\widetilde t(a,b)\geq 0
\qquad\text{and}\qquad
\widetilde t(b,c)\geq 0.
\]
Therefore,
\[
\widetilde t(a,c)-\widetilde t(a,b)-\widetilde t(b,c)\leq 0.
\]
The Boolean triangle condition says this quantity lies in $\{0,1\}$, so it must
be equal to $0$. In particular, $\widetilde t(a,c)=0$, and therefore $a<c$.

Repeating the same argument for the directed paths $b\to c\to a$ and
$c\to a\to b$ gives
\[
b<a
\qquad\text{and}\qquad
c<b,
\]
respectively. Thus
\[
b<a<c<b,
\]
which is impossible. Hence $T$ contains no directed $3$-cycle, and
therefore $T$ is acyclic.
\end{proof}

\begin{proof}[Proof of \Cref{thm:sufficiency}] By construction, the
affine inversion graph $G_{w}$ is an acyclic weighted tournament with
nonnegative integer edge weights satisfying the zero weight condition.
By \Cref{cor:papi-triangle.identity}, we know that the graph $G_{w}$
satisfies the Boolean triangle condition given in \eqref{eq:shiftedtriangle} for
every $a\to b\to c$ in $G_{w}$ and $w \in \affineS_{n}$.  Hence, it
suffices to prove the converse holds.

Assume $T$ is a weighted tournament on $[n]$ with adjacency matrix
$t(\cdot,\cdot)\in \Z_{\geq 0}^{n \times n}$ satisfying the zero
weight condition and the Boolean triangle condition.  We will verify
conditions (I) and (II) from Theorem~\ref{thm:papi4}.

To verify condition (I), fix distinct $a,b,c\in[n]$ such that 
$(a\to b,b\to c)\in A_T\times A_T$, so $t(a,b)>0$
and $t(b,c)>0$. Then $a\to b\to c$ is a directed path in $T$ by
hypothesis.  By \eqref{eq:shiftedtriangle},
\[
 \widetilde t(a,c) - \widetilde t(a,b) -\widetilde t(b,c)\geq 0,
\]
so in particular
\begin{equation}\label{eq:papi.in.1} 
\widetilde t(a,c)\ge \widetilde t(a,b)+\widetilde t(b,c).
\end{equation}
This is exactly condition~(I) for the pair 
$(a\to b, b \to c)\in A_{T}\times A_{T}$.

Next consider condition (II). Fix an edge $(a\to b)$ of $T$ such that
$t(a,b)>0$.  Let $c\in[n]\setminus\{a,b\}$ and consider the ordered
pairs $(a,c)$ and $(c,b)$.  Crucially observe that $a \to c$ and $c
\to b$ need not lie in $A_{T}$.  If the edge between $a$ and $c$ has
weight zero in $T$, then both matrix entries $t(a,c)$ and $t(c,a)$ are
zero by hypothesis. Consequently $\widetilde t(a,c)$ is either $0$ or
$-1$, depending on the relative order of $a$ and $c$ as integers.
Similarly, $\widetilde t(c,b)=$ is either $0$ or $-1$.

To verify condition~(II) for the given triple $a,b,c$, we will prove
the stronger claim that for every pair of integers $m,r$ such that
$m+r=\widetilde t(a,b)$, one has
\[
\widetilde t(a,c)\ge m \qquad \text{or}\qquad \widetilde t(c,b)\ge r.
\]
To prove the claim, suppose for contradiction that there exist
integers $m$ and $r$ with $m+r=\widetilde t(a,b)$ such that
\[
\widetilde t(a,c)<m
\qquad\text{and}\qquad
\widetilde t(c,b)<r.
\]
Since all quantities are integral, we necessarily have
\[
m\ge \widetilde t(a,c)+1
\qquad \text{and}\qquad
r\ge \widetilde t(c,b)+1.
\]
Thus,
\begin{equation}\label{eq:contradictsum}
m+r\ge \widetilde t(a,c)+\widetilde t(c,b)+2.
\end{equation}
It now suffices to prove the inequality
\begin{equation}\label{eq:upperboundII}
\widetilde t(a,b)\le
\widetilde t(a,c)+\widetilde t(c,b)+1
\qquad\text{for all }c\neq a,b.
\end{equation}
Indeed, combining \eqref{eq:contradictsum} with
\eqref{eq:upperboundII} gives
\[
m+r\ge \widetilde t(a,c)+\widetilde t(c,b)+2
> \widetilde t(a,b),
\]
contradicting $m+r=\widetilde t(a,b)$.

By \Cref{lem:transitivity}, $T$ is acyclic.  Hence, there is a total
order $<_T$ on $[n]$ defined by $a<_T b$ if and only if $a\to b$ is an
edge in $T$.  We analyze the cases depending on the three possible
positions of $c$ in the total order determined by $T$, given that $a
<_T b$.

\begin{itemize}
\item If $a<_T c<_T b$, then $a\to c\to b$ is a directed path in $T$, and
\eqref{eq:shiftedtriangle} gives \eqref{eq:upperboundII} immediately.

\item If $c<_T a$, then $c\to a\to b$ is a directed path in $T$.
Applying \eqref{eq:shiftedtriangle} to $c\to a\to b$ yields
\[
 \widetilde t(c,b)\ \ge\ \widetilde t(c,a)+\widetilde t(a,b),
\]
hence $\widetilde t(a,b)\le \widetilde t(c,b)-\widetilde t(c,a)$.
On the other hand, for any ordered pair $(p,q)\in [n]^{2}$ one has the elementary bound
\begin{equation}\label{eq:pairineq}
\widetilde t(p,q)+\widetilde t(q,p)\ge -1,
\end{equation}
which follows directly from \eqref{eq:shiftdef} and $t(\cdot,\cdot)\ge 0$.
Applying \eqref{eq:pairineq} to $(p,q)=(a,c)$ gives
$-\widetilde t(c,a)\le \widetilde t(a,c)+1$, and therefore
\[
 \widetilde t(a,b)\le \widetilde t(c,b)+\widetilde t(a,c)+1,
\]
which is \eqref{eq:upperboundII}.

\item If $b<_T c$, then $a\to b\to c$ is a directed path in $T$.
Applying \eqref{eq:shiftedtriangle} to $a\to b\to c$ yields
\[
 \widetilde t(a,c)\ \ge\ \widetilde t(a,b)+\widetilde t(b,c),
\]
hence $\widetilde t(a,b)\le \widetilde t(a,c)-\widetilde t(b,c)$.
Applying \eqref{eq:pairineq} to $(p,q)=(b,c)$ gives
$-\widetilde t(b,c)\le \widetilde t(c,b)+1$, and therefore
\[
 \widetilde t(a,b)\le \widetilde t(a,c)+\widetilde t(c,b)+1,
\]
again yielding \eqref{eq:upperboundII}.
\end{itemize}
In all cases, \eqref{eq:upperboundII} holds. As noted above, this results in
a contradiction, so we conclude (II) holds.
\end{proof}

\begin{corollary}[(Path inequalities)] Let $T=G_w$ be an affine
inversion graph, written with shifted weights $\widetilde t$.  If
\[
a_0\to a_1\to\cdots\to a_k
\]
is a directed path in $T$, then $a_{0} \to a_{k}$ is an edge in $T$ as
well, and 
\begin{equation}\label{eq:path.inequalities}
0\le
\widetilde t(a_0,a_k)-
\sum_{r=0}^{k-1}\widetilde t(a_r,a_{r+1})
\le k-1.
\end{equation}
\end{corollary}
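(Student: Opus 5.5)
We argue by induction on $k$, using \Cref{lem:transitivity} for the edge claim and the Boolean triangle condition \eqref{eq:shiftedtriangle} from \Cref{thm:sufficiency} for the inequalities. Since $T=G_w$ satisfies the zero weight condition and the Boolean triangle condition, \Cref{lem:transitivity} shows $T$ is an acyclic tournament. It therefore determines a total order $<_T$ with $p<_T q$ if and only if $p\to q$ is an edge. A directed path $a_0\to\cdots\to a_k$ gives $a_0<_T a_1<_T\cdots<_T a_k$, so $a_0<_T a_k$ by transitivity, and hence $a_0\to a_k$ is an edge (for $k\ge 1$). The same argument shows $a_0\to a_j$ is an edge for every $1\le j\le k$, which we need in the induction.

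For the inequalities, write $D_k=\widetilde t(a_0,a_k)-\sum_{r=0}^{k-1}\widetilde t(a_r,a_{r+1})$. When $k=1$ we have $D_1=0$, and $0\le 0\le 0$ holds. When $k=2$, \eqref{eq:shiftedtriangle} gives $D_2\in\{0,1\}$ directly. For the inductive step, assume $0\le D_{k-1}\le k-2$ for the path $a_0\to\cdots\to a_{k-1}$. Since $a_0\to a_{k-1}\to a_k$ is a directed path in $T$, the Boolean triangle condition gives
\[
\widetilde t(a_0,a_k)=\widetilde t(a_0,a_{k-1})+\widetilde t(a_{k-1},a_k)+\epsilon,\qquad \epsilon\in\{0,1\}.
\]
Substituting this into the definition of $D_k$ yields the telescoping identity $D_k=D_{k-1}+\epsilon$. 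Therefore $0\le D_k\le k-2+1=k-1$, which is \eqref{eq:path.inequalities}.

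There is no real obstacle here. The only point needing care is that every shortcut edge $a_0\to a_j$ exists in $T$, so that the triangle condition can be applied at each stage. This is exactly what acyclicity from \Cref{lem:transitivity} provides. Equivalently, $D_k$ can be written as a sum of $k-1$ triangle defects $\epsilon_j\in\{0,1\}$, one for each triangle $a_0\to a_{j-1}\to a_j$ with $2\le j\le k$.
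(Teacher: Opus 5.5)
Your proof is correct and is essentially the paper's argument: induction on $k$ using the Boolean triangle condition, with \Cref{lem:transitivity} supplying acyclicity and hence the shortcut edges. The only cosmetic difference is that the paper applies the triangle condition to $a_0\to a_1\to a_k$ and inducts on the path $a_1\to\cdots\to a_k$, whereas you use $a_0\to a_{k-1}\to a_k$ and induct on $a_0\to\cdots\to a_{k-1}$.
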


\begin{proof}
The claim is trivial for $k=1$, so assume $k\geq 2$. By \cref{lem:transitivity},
 $T=G_w$ is an acyclic tournament, so
the directed path $a_0\to a_1\to\cdots\to a_k$
implies that $a_i\to a_j$ is an edge of $T$ whenever $0\leq i<j\leq
k$.  By \Cref{thm:sufficiency} and the Boolean triangle condition
applied to the directed path $a_0\to a_1\to a_k$, we have
\[
0\leq \widetilde t(a_0,a_k)-\widetilde t(a_0,a_1)-\widetilde
t(a_1,a_k) \leq 1.
\]
By induction applied to the directed path
$a_1\to a_2\to\cdots\to a_k$, we have
\[
0\leq
\widetilde t(a_1,a_k)-
\sum_{r=1}^{k-1}\widetilde t(a_r,a_{r+1})
\leq k-2.
\]
Adding these two inequalities proves \eqref{eq:path.inequalities}.
\end{proof}

\begin{corollary}[(Hereditary property)] Let $G_w$ be an affine
inversion graph on $[n]$, and let $S\subseteq[n]$.  Order-standardize
the vertex set $S$ to $[|S|]$, preserving the natural order of labels,
and keep the induced orientations and weights in the induced subgraph
of $G_w$ on $S$.  The resulting weighted tournament is an affine
inversion graph.
\end{corollary}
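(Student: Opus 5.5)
The plan is to reduce the Hereditary property to \Cref{thm:sufficiency}. Write $S=\{s_1<s_2<\cdots<s_m\}$ and let $\sigma\colon S\to[m]$, $\sigma(s_i)=i$, be the order-preserving standardization. Let $T'$ be the weighted tournament on $[m]$ with $t'(\sigma(p),\sigma(q)) = t_w(p,q)$ for $p,q\in S$. By the easy direction of \Cref{thm:sufficiency}, $G_w$ is a weighted tournament with nonnegative integer weights, it satisfies the zero weight condition, and it satisfies the Boolean triangle condition. It therefore suffices to show that $T'$ inherits these three properties; the converse direction of \Cref{thm:sufficiency} then produces a (unique) $w'\in\asg{m}$ with $G_{w'}=T'$.

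\textbf{Step 1 (tournament and weights).} The induced subgraph of a tournament on $S$ is again a tournament, and the relabeling $\sigma$ changes neither orientations nor weights, so $T'$ is a weighted tournament with entries in $\Z_{\ge 0}$.

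\textbf{Step 2 (zero weight condition).} Suppose $t'(i,j)=t'(j,i)=0$ with $i<j$. Since $\sigma$ is order preserving, $s_i<s_j$ and $t_w(s_i,s_j)=t_w(s_j,s_i)=0$. The zero weight condition for $G_w$ then orients the edge as $s_i\to s_j$, so in $T'$ it is oriented $i\to j$.

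\textbf{Step 3 (Boolean triangle condition).} The key observation is that the shifted matrix is compatible with $\sigma$:
\[
\widetilde{t'}(\sigma(p),\sigma(q)) = \widetilde t_w(p,q) \quad\text{for all } p,q\in S.
\]
This holds because the correction $\delta_{\sigma(p)>\sigma(q)}$ equals $\delta_{p>q}$, again since $\sigma$ is order preserving. Now a directed path $i\to j\to k$ in $T'$ corresponds to the directed path $s_i\to s_j\to s_k$ in $G_w$. The quantity $\widetilde{t'}(i,k)-\widetilde{t'}(i,j)-\widetilde{t'}(j,k)$ therefore equals the corresponding quantity for $s_i,s_j,s_k$ in $G_w$, which lies in $\{0,1\}$ by \Cref{cor:papi-triangle.identity} (equivalently, by \Cref{thm:sufficiency}).

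There is no serious obstacle. The only point needing care is in Step 3: the shift in \eqref{eq:shiftdef} depends on the integer order of the labels. This is exactly why the standardization must preserve the natural order, and an arbitrary relabeling would break both the zero weight condition and the shifted triangle condition. The degenerate cases $|S|\le 2$ need no separate treatment, since the triangle condition is then vacuous and the theorem still applies.
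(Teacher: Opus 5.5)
Your proposal is correct and follows the same route as the paper. The paper also observes that the zero weight condition and the Boolean triangle condition pass to induced subtournaments, and that order-preserving standardization keeps the comparisons $a<b$, hence the shift $\delta_{a>b}$; it then invokes \Cref{thm:sufficiency}. Your Steps 2 and 3 simply spell out these checks in more detail.
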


\begin{proof}
The zero-weight condition and the Boolean triangle condition are inherited
by induced subtournaments.  The order-standardization preserves
comparisons $a<b$, so the proof follows from \Cref{thm:sufficiency}.
\end{proof}

We return to the problem of characterizing inversion sets via
\Cref{lem:inversion.sets}.  Observe that a priori a direct test of the
conditions in \cref{lem:inversion.sets} requires verification for the
infinite set of triples $a<b<c$ in $\Z$.  However, the three
conditions in \Cref{lem:inversion.sets} can be checked directly in
$O(|R|^2)$ arithmetic operations.  First check the congruence-string
condition \Cref{lem:inversion.sets}\ref{part:inv.congruence}. This
gives, for each ordered residue pair $i\neq j$, a string length
$m_{ij}$ such that
\[
(x,y)\in R
\quad\Longleftrightarrow\quad
x<y,\quad x\not\equiv y\pmod n,\quad
\left\lfloor\frac{y-x}{n}\right\rfloor < m_{\overline{x},\overline{y}}.
\]
Using this membership test, condition~(I) can be checked in $O(|R|^2)$
operations: for each $(a,c)\in R$, if $c-a-1>2|R|-2$, then condition~(I)
fails immediately; otherwise, check all $b$ with $a<b<c$. Condition~(II)
can also be checked in $O(|R|^2)$ operations by running over all ordered
pairs of elements of $R$ that can be translated to the form
$(a,b),(b,c)$ and then checking whether $(a,c)\in R$.

The algorithm below instead compresses the congruence strings into a
weighted tournament and applies the triangle characterization. This
gives an $O(|R|+n^3)$ recognition algorithm.  Since $|R|$ can be
arbitrarily large for a fixed $n$, this improves on the direct
$O(|R|^2)$ test when $n^3\ll |R|^2$.

\begin{theorem}[(Inversion recognition complexity)]\label{cor:triangle.rule.test.inversion}
Let $R$ be a finite subset of $\mathbb{Z}^2/(\mathbb{Z}\cdot (n,n))$.
Then, there is an algorithm requiring at most $O(|R| + n^{3})$
arithmetic operations to test if $R$ is the inversion set of some
necessarily unique $w \in \affineS_{n}$.
\end{theorem}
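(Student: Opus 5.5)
We will test $R$ by compressing it into a candidate weighted tournament $T$, checking the local conditions of \Cref{thm:sufficiency}, and then checking that $T$ really encodes $R$.

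\textbf{Step 1: single pass over $R$.} We make one pass over $R$ and store, for each ordered pair of distinct residues $(i,j)$ with $i,j\in[n]$, two pieces of data: the number of elements of $R$ in the string $(i, j+kn)$ (after translating by multiples of $(n,n)$, each element of $R$ has a unique representative with first coordinate in $[n]$), and the maximum $k$ that occurs. Each element costs $O(1)$ operations. During the pass we reject immediately if some $(a,b)\in R$ has $a\ge b$ or $a\equiv b \pmod n$.

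\textbf{Step 2: verify condition (III) of \Cref{lem:inversion.sets}.} Condition~\ref{part:inv.congruence} holds exactly when each string is an initial segment, that is, when for every $(i,j)$ the stored count equals $1+$(max index minus min index) and the minimal element is the least admissible $b$ above $a$. This is $O(1)$ per pair, using additional min-tracking in Step 1. It yields integers $m_{i,j}\ge 0$ with $(i,j+kn)\in R$ if and only if $0\le k<m_{i,j}$, where $j+kn>i$ is normalized so that $k=0$ is the smallest admissible value.

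\textbf{Step 3: build and test the tournament.} From \Cref{def:BB-graph} and the discussion after \Cref{lem:inversion.sets}, an inversion string on residues $\{i,j\}$ corresponds to an edge of $G_w$ whose weight is the string length, oriented against the inversion. So for each unordered pair $\{i,j\}$ we require that at most one of the two strings, in direction $(i,j)$ or $(j,i)$, is nonempty; otherwise we reject, since by the remark after \Cref{lem:inversion.sets} both cannot be inversions. We set the edge weight to the nonempty length and orient it accordingly. If both strings are empty, we orient $i\to j$ with $i<j$ and weight $0$, which enforces the zero weight condition by construction.

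We then check the Boolean triangle condition \eqref{eq:shiftedtriangle} on all $O(n^3)$ directed paths $a\to b\to c$. By \Cref{thm:sufficiency}, this succeeds if and only if $T=G_w$ for a unique $w$. Since the inversion set determines and is determined by $G_w$ through the strings, $R=\Inv(w)$ if and only if the construction succeeded. Uniqueness of $w$ follows from \Cref{thm:bb.unique}.

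\textbf{Main obstacle.} The delicate step is the exact dictionary between strings and edge data. We must pin down precisely how the string direction $(i,j)$ relates to edge orientation $j\to i$ for $i<j$ versus $i>j$, and verify that the length of the string equals $t_w$ on that edge as in \Cref{def:BB-graph}. Otherwise the converse direction fails: a tournament passing the test must reproduce exactly $R$. We would verify this using \Cref{lem:m.simplified} and the example with $w=(4,5,-1,2)$, handling the two cases $i<j$ and $i>j$ separately. The total cost is $O(|R|)$ for Steps 1--2 plus $O(n^2)$ for building $T$ and $O(n^3)$ for the triangle test.
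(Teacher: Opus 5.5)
Your proposal is correct and follows essentially the same route as the paper: one pass bucketing $R$ into residue strings (rejecting $x\ge y$ or $x\equiv y \pmod n$), rejecting strings that are not initial segments or that are nonempty in both directions, building the zero-weight-condition tournament with weight equal to string length and orientation against the inversion, and then running the $O(n^3)$ Boolean triangle test via \Cref{thm:sufficiency}. Your count/min/max bookkeeping is just an explicit $O(|R|+n^2)$ implementation of the paper's initial-segment check, and the dictionary you flag as the main obstacle is a routine count of the $k$ with $kn<w_i-w_j$; this gives $\lceil (w_i-w_j)/n\rceil$ for $i<j$ and $\lfloor (w_i-w_j)/n\rfloor$ for $i>j$, both equal to $t_w(j,i)$.
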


\begin{proof}
The following algorithm returns a completed affine inversion graph $T$ if $R$
is realizable as an affine inversion set for some $w\in\affineS_n$, and
returns \textsc{False} otherwise.

\begin{enumerate}[
    labelwidth=5.5em,
    labelsep=0.0em,
    leftmargin=!,
    align=left,
    itemsep=0.65em
]

\item[\textbf{Input.}]
A finite set $R\subseteq \Z^2/(\Z\cdot(n,n))$.

\item[\textbf{Output.}]
A completed affine inversion graph $T$, or \textsc{False}.

\item[\textbf{Step 1.}]
\emph{Residue strings.}
For each ordered pair $(a,b)\in[n]\times[n]$, initialize a set
$S_{(a,b)}=\emptyset$. For each class $\rho\in R$, choose a representative
$(x,y)\in\Z^2$.

\begin{enumerate}[
    label=(\alph*),
    leftmargin=4.0em,
    labelsep=0.7em,
    itemsep=0.25em
]
    \item If $x\geq y$ or $x\equiv y\pmod n$, return \textsc{False}.

    \item Otherwise, let $a,b\in[n]$ be the residues of $x$ and $y$ modulo $n$,
    respectively, and set
    \[
    j=\left\lfloor\frac{y-x}{n}\right\rfloor\in\Z_{\geq 0}.
    \]

    \item Insert $j$ into $S_{(a,b)}$.
\end{enumerate}
Observe that with our convention that residues lie in $[n]$, we have
\[
(x,y)=\bigl(a,b+n(j+\delta_{a>b})\bigr)
\]
as elements of $\Z^2/(\Z\cdot(n,n))$.

\item[\textbf{Step 2.}]
\emph{Candidate tournament.}
We try to construct a weighted tournament $T$ on vertex set $[n]$ satisfying
the zero weight condition. For each pair $1\leq a<b\leq n$, first check that
at most one of $S_{(a,b)}$ and $S_{(b,a)}$ is nonempty, and that each
nonempty set among them is an initial segment of the form
\[
\{0,1,\ldots,q-1\}
\]
for some $q\geq 1$. If either check fails, return \textsc{False}.

If the checks pass, assign the edge between $a$ and $b$ by
\[
\begin{array}{rcll}
S_{(a,b)}\neq\emptyset
&\Longrightarrow&
b\to a
&\text{with weight } t(b,a)=|S_{(a,b)}|,\\[2mm]
S_{(a,b)}=\emptyset
&\Longrightarrow&
a\to b
&\text{with weight } t(a,b)=|S_{(b,a)}|.
\end{array}
\]
In the second case the weight may be zero. Thus, if the construction succeeds,
the resulting weighted tournament satisfies the zero weight condition.

\item[\textbf{Step 3.}]
\emph{Triangle test.}
Apply the verification algorithm from \Cref{cor:verification.time} to the
weighted tournament $T$. If $T$ passes the test, return $T$; otherwise return
\textsc{False}. This completes the algorithm.

\end{enumerate}

We now verify correctness.  If the algorithm returns \textsc{False} in
Step~1, then $R$ contains a class that cannot occur in an affine
inversion set, since affine inversions must have representatives
$(x,y)$ with $x<y$ and $x\not\equiv y\pmod n$ by \cref{lem:inversion.sets}.  Note that since
translation by $(n,n)$ preserves both the inequality $x<y$ and the
congruence class of $x-y$ modulo $n$, the test in Step~1 is
independent of the chosen representative.  If the algorithm returns
\textsc{False} in Step~2, then either the congruence-string condition
in \Cref{lem:inversion.sets}\ref{part:inv.congruence} fails, or the
data would force positive weights in both directions between two
vertices. Neither can occur for an affine inversion graph.  If the
algorithm returns \textsc{False} in Step~3, then the candidate
tournament is not a completed affine inversion graph by
\Cref{cor:verification.time}. Hence $R$ cannot be the inversion set of
an affine permutation, since an actual affine inversion set would have
produced exactly its completed affine inversion graph in Step~2.

Conversely, suppose the algorithm returns a tournament $T$.  By
\Cref{thm:sufficiency}, there is a unique $w\in\affineS_n$ such that
$T=G_w$.  We claim that $\Inv(w)=R$. To prove the claim, fix $1\leq
a<b\leq n$. If $S_{(a,b)}=\{0,1,\ldots,q-1\}$, then Step~2 gives the
edge $b\to a$ with weight $q$. Therefore, the inversions of $w$ in
these two residue classes are precisely
\[
(a,b+kn),\qquad 0\leq k<q.
\]
If instead $S_{(b,a)}=\{0,1,\ldots,q-1\}$, then Step~2 gives the edge $a\to b$
with weight $q$. Therefore, the inversions of $w$ in these two residue classes
are precisely
\[
(b,a+(k+1)n),\qquad 0\leq k<q.
\]
Finally, if both $S_{(a,b)}$ and $S_{(b,a)}$ are empty, then the edge between
$a$ and $b$ has weight zero, so there are no inversions in either direction
between these two residue classes. Thus the residue strings recovered from
$G_w=T$ are exactly the residue strings constructed from $R$, and hence
$\Inv(w)=R$.

The construction of the sets $S_{(a,b)}$ and the candidate tournament requires
$O(|R|+n^2)$ operations. The final triangle test requires 
$O(n^3)$ operations by \cref{cor:verification.time},
so the total number of operations is $O(|R|+n^3)$.
\end{proof}

\begin{remark}
In the proof of \Cref{cor:triangle.rule.test.inversion}, one could
also use Papi's affine inversion graph characterization in
\Cref{thm:papi4} instead of a test based on \Cref{thm:sufficiency}.
Condition (I) of \Cref{thm:papi4} requires $O(n^{3})$ operations.  A
naive implementation of condition (II), obtained by enumerating all
compositions $m+r=\widetilde t(a,b)$, requires $O(n^3\cdot M)$ operations, where
$M$ is the maximum value in the weighted adjacency matrix.  Here $M$
can be an arbitrarily large integer, so the corollary above is an
improvement over this approach to verification as well.
\end{remark}

\printbibliography

\end{document}